     \def\section{\@startsection{section}{1}%
     \z@{.7\linespacing\@plus\linespacing}{.5\linespacing}%
     {\bfseries
     \centering
     }}
     \def\@secnumfont{\bfseries}
\newtheorem{theorem}{Theorem}[section]
\newtheorem{lemma}[theorem]{Lemma}
\newtheorem{proposition}[theorem]{Proposition}
\newtheorem{corollary}[theorem]{Corollary}
\theoremstyle{definition}
\theoremstyle{remark}
\newtheorem{remark}[theorem]{Remark}
\numberwithin{equation}{section}
\def \a{{\alpha}}
\def \b{{\beta}}
\def \D{{\Delta}}
\def \d{{\delta}}
\def \e{{\varepsilon}}
\def \g{{\gamma}}
\def \o{{\omega}}
\def \p{{\varphi}}
\def \m{{\mu}}
\def \s{{\sigma}}
\def \E{{\bf E}\, }
\def \N{{\bf N}}
\def \pp{ {{\bar{\pi}}}}
\def \P{{\bf P}}
\def \qq{{\qquad}}
\def \R{{\bf R}}
\def \T{{\bf T}}
\def \Z{{\bf Z}}
\def \dd{{\rm d}}
\def \noi{{\noindent}}
\def\E{{\mathbb E \,}}
\def \T{{\mathbb T}}
\def\P{{\mathbb P}}
\def\R{{\mathbb R}}
\def\Z{{\mathbb Z}}
\def\N{{\mathbb N}}
\font\phh=cmcsc10
\font\gsec= cmb10 at 10 pt
\font\phh=cmr10 at  8,2 pt
\font\gsec= cmb10 at 9,5  pt
\font\gssec= cmb10 at 8,2  pt
  \title[On small deviations of  Gaussian processes]{On Small deviations of   Gaussian processes using majorizing measures}
  \author{ Michel J.\,G. Weber}
\address{ Michel Weber: IRMA, Universit\'e
Louis-Pasteur et C.N.R.S.,   7  rue Ren\'e Descartes, 67084
Strasbourg Cedex, France. }
\email{michel.weber@math.unistra.fr
}
 \urladdr{http://www-irma.u-strasbg.fr/$\sim$weber/}
\begin{document}
 \maketitle
\vskip -15pt
 \centerline{---------------------------------------------------------------------------------------------------------}
\vskip - 25pt
   \begin{abstract}  We give two examples of periodic   Gaussian processes, having   entropy numbers   of exactly same order but  radically different small deviations. Our construction is based on classical Knopp's result yielding of    existence of continuous nowhere differentiable functions, and more precisely on Loud's  functions. We also obtain  a general lower bound for small deviations using the majorizing measure method. We show on examples that our bound is sharp. We also apply it to Gaussian independent sequences and to the generic class of ultrametric Gaussian processes.
    \end{abstract}
\maketitle
      \vskip 0,5cm \noi {\gssec 2010 AMS Mathematical Subject Classification}: {\phh Primary: 60F15, 60G50 ;
Secondary: 60F05}.  \par\noi  
{{\gssec Keywords and phrases}: {\phh small deviations, Gaussian processes, entropy numbers, majorizing measure method, ultrametric Gaussian processes.}} 

 \centerline{-----------------------------------------------------------------------------------------------------------}
 
  
   
 \section{Introduction--Preliminaries}  The  relatively recent  small deviations theory  of    Gaussian processes and of more
general
  processes  is a very active and interactive domain of research, having connections with statistics and operator theory.
 This is also completing the   one   of large
deviations, which has been earlier extensively investigated.
  The   large
deviations theory is essentially based on the   Borel-Sudakov-Tsirelson isoperimetric inequality, regularity methods (metric entropy
method, majorizing measure method) as well as   Slepian comparison lemma. Although some of these tools are    relevant in the   study of
  small deviations,   this one   also relies upon  
  intrinsic devices: Laplace transform, Tauberian theorems, subadditive lemma, and most importantly,    Kathri-Sid\'ak's inequality   implying for any  
Gaussian vector
$(X_1,\ldots, X_J)$ that
\begin{equation}\label{ks} \prod_{j=1}^J \P \{  |X_j|\le z \}\le \P\big\{ \sup_{j=1}^J |X_j|\le z\big\}.
\end{equation} 
  Talagrand's well-known lower bound   \cite{[T]} is based on this device. Let $\{ X(t), t\in T\}$ be a Gaussian process and let as customary $d(s,t)=\|X(s)-X(t)\|_2$, $s,t\in T$. Recall that the entropy number $N(T,d,\e)$  is the minimal number (possibly infinite) of   $d$-balls of radius $\e>0$ enough to cover $T$. Assume there exists a nonnegative function $\phi$ on $\R^+$ such that $N(T,d,\e)\le \phi (\e)$, and moreover $c_1\phi(\e)\le \phi(\frac{\e}{2}) \le c_2\phi(\e)$ for some constants $1<c_1\le c_2<\infty$. Then, for some $K>0$ and every $\e>0$,
 \begin{equation}\label{tallow} \P\big\{ \sup_{s,t\in T}|X(s)-X(t)|\le \e\big\}\ge e^{-K\phi(\e)}.
  \end{equation}
 This estimate has been recently improved in \cite{AL} where a much larger set of  size's functions $\phi$ is permitted. The basic idea is the use of inequality (\ref{ks}) to control the Laplace transform of some standard approximating chaining sum, and next to apply de Bruijn's Tauberian result. 
As moreover, some general links between the Kolmogorov's entropy  function $H(\e)=
 \log N(T,d,\e)$ (relatively to the unit ball of the associated reproducing Hilbert space) and $-\log   \P\big\{ \sup_{ t\in T}| X(t)|\le \e\big\}$ have been earlier established by   Kuelbs and Li (see \cite{LS} or \cite{Lif}),
there seems to be a kind of dual behavior between small deviations   and   entropy numbers  of a Gaussian process.
 \vskip 2pt
However, this is not exactly so. The convenient estimate  (\ref{tallow})  is  indeed  known to not always provide sharp lower estimates, whereas is some
 cases it is quite sharp. See \cite{LS} \S 3.4, \cite{Lif}  \S 2-3. A typical instance is $X(t)=g|t|^\a$, $t\in [0,1]$ where $0<\a\le1$.
We have $d (s,t) \le |t-s|^\a$, so that $N(T,d ,\e)\le C \e^{1/\a}$. However  $\P\big\{ \sup_{0\le s,t\le 1}|X(s)-X(t)|\le
\e\big\}\approx
\e$.    In fact,  much  more can be said. In section \ref{breakingd}, we show that there exist two   sample continuous periodic Gaussian processes, with  
entropy numbers   of {\it exactly} same order, but having radically different small deviations. There also exist   aperiodic sample continuous Gaussian
processes for which this duality  breakes even more dramatically.
 In Section \ref{mmlb}, we establish a new  general lower bound for small deviations by using the majorizing measure method. We show on examples that our bound is sharp. We also apply it to Gaussian independent sequences and to the generic class of ultrametric Gaussian processes.
\smallskip\par \noi {\gsec Notation and convention.} All   Gaussian processes we consider are supposed to be {\it centered}.  
The letter $g$ is used to denote  throughout   a standard Gaussian random variable. Further   $g_1, g_2, \ldots$ will
always  denote a sequence of i.i.d. standard Gaussian   random variables.   The notation $f(t)\preceq h(t)$ (resp. $f(t)\succeq h(t)$) near $ t_0\in\overline \R$ means that for $t$ in a neighborhood of $t_0$, $ |f(t)|\le c |h(t)| $, (resp. $ |f(t)|\ge c |h(t)| $) for some constant  $0<c <\infty$.
  We write $f(t)\approx h(t)$ when $f(t)\preceq h(t)$ and  $f(t)\succeq h(t)$. 

\section{Examples Breaking the Duality with Entropy Numbers} 
  \label{breakingd}
   By   considering  two kind of processes, one of   type $X(t)=gf(t)$ and the second as 
in example given in
\cite{Lif} (see (2.5),(2.6) and also \cite{LS} section 3.4), we will prove the following striking result.  
\begin{theorem} \label{twoex}Let $0<\a<1$. There exists two   cyclic continuous Gaussian processes $X_1(t), X_2(t)$, $t\ge 0$ such that, as
$\e \to 0$,
$$N([0,1],d_{X_i},\e)\approx \e^{1/\a}, \qq \quad i=1,2.$$
However,   
$$\P\{\sup_{0\le t\le 1 }|X_1(t)|\le \e \}\approx \e, \qq \log \P\{\sup_{0\le t\le 1 }|X_2(t)|\le \e \}\approx -(\log\frac1{\e})^2.$$
\end{theorem}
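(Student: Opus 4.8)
The plan is to construct the two processes explicitly and to read their entropy numbers and small deviations directly off the structure. I would take $X_1(t)=g\,f(t)$, where $f$ is a $1$-periodic continuous nowhere differentiable function of exact H\"older exponent $\a$ (a Loud function), and for $X_2$ the lacunary Gaussian series
\[
X_2(t)=\sum_{k\ge 0}2^{-\a k}\bigl(g_k\cos(2\pi 2^k t)+g_k'\sin(2\pi 2^k t)\bigr),
\]
the $g_k,g_k'$ being i.i.d. standard Gaussian. Both are cyclic; $X_2$ is sample continuous since $\sum_k 2^{-\a k}<\infty$ makes the series a.s. uniformly convergent. For each process I would first compute the canonical metric $d(s,t)$, deduce the entropy number, and then treat the small deviation separately.

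For $X_1$ the metric is $d_{X_1}(s,t)=\|g\,(f(s)-f(t))\|_2=|f(s)-f(t)|$. Here the point of taking $f$ nowhere differentiable is that $d_{X_1}$ is H\"older of order $\a$ but no smoother: from $|f(s)-f(t)|\preceq|s-t|^{\a}$, covering $[0,1]$ by $\approx\e^{-1/\a}$ intervals of length $\e^{1/\a}$ gives sets of $d_{X_1}$-diameter $\preceq\e$, whence $N([0,1],d_{X_1},\e)\preceq\e^{-1/\a}$; the matching lower estimate I would draw from the uniform oscillation bound $\sup_{|s-t|\le h}|f(s)-f(t)|\succeq h^{\a}$ that Loud's construction guarantees at every scale. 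The small deviation is then one-dimensional and immediate: $\sup_{0\le t\le 1}|X_1(t)|=|g|\,\|f\|_\infty$, so $\P\{\sup_t|X_1(t)|\le\e\}=\P\{|g|\le\e/\|f\|_\infty\}\sim\sqrt{2/\pi}\,\e/\|f\|_\infty\approx\e$.

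For $X_2$ I would expand $d_{X_2}(s,t)^2=4\sum_{k\ge0}2^{-2\a k}\sin^2(\pi 2^k(s-t))$ and, splitting the sum at the index $k_0$ with $2^{k_0}\approx|s-t|^{-1}$, obtain $d_{X_2}(s,t)\approx|s-t|^{\a}$; hence $N([0,1],d_{X_2},\e)\approx\e^{-1/\a}$, of the same order as for $X_1$. For the small deviation I would use that the frequencies $2^k$ form a Hadamard lacunary (Sidon) set, so that almost surely $\|X_2\|_\infty\approx\sum_{k\ge0}2^{-\a k}R_k$ with $R_k=(g_k^2+g_k'^2)^{1/2}$ independent (the upper bound is the triangle inequality, the lower bound is Sidon's inequality). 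The event $\{\sum_k 2^{-\a k}R_k\le\e\}$ forces $R_k\le\e\,2^{\a k}$ for every $k$; by independence and $\P\{R_k\le x\}\approx x^2/2$ near $0$, summing $-\log$ over $k\le M:=\frac{1}{\a\log 2}\log\frac1\e$ yields $-\log\P\approx(\log\frac1\e)^2$, and a matching lower bound on the probability follows by allocating the budget $\e$ geometrically among the terms. Thus $\log\P\{\sup_t|X_2(t)|\le\e\}\approx-(\log\frac1\e)^2$.

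The hard part will be the entropy side of $X_1$. Since the small deviation $\approx\e$ forces $X_1$ to behave like a one-parameter Gaussian, one must argue carefully that the oscillations of the nowhere differentiable $f$ really push the entropy number for the metric $d_{X_1}=|f(s)-f(t)|$ up to the claimed order, coinciding with the genuinely two-sided $\e^{-1/\a}$ obtained for $X_2$; this is precisely where Loud's sharp estimates enter. The remaining ingredients — the Sidon equivalence for $\|X_2\|_\infty$ and the two-sided small-ball estimate for $\sum 2^{-\a k}R_k$ (upper bound via independence together with the Kathri-Sid\'ak inequality (\ref{ks}), lower bound via the geometric budget allocation) — are routine once that equivalence is in hand.
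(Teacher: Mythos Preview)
Your construction of $X_2$ as a lacunary trigonometric series differs from the paper's (which uses a random lacunary combination of saw-tooth functions, the same building blocks as Loud's $f$) but is equally valid: Sidon's inequality for Hadamard frequencies gives the almost-sure equivalence $\|X_2\|_\infty\approx\sum_k 2^{-\a k}R_k$, and both directions of the small-deviation estimate then follow by a product computation of the same type as Lemma~\ref{prod}. The paper instead evaluates its $X_2$ at the points $2^{-2Ak}$ and differences successively to isolate the independent $g_k$; your route trades that explicit calculation for a classical harmonic-analysis inequality.

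There is, however, a genuine gap in the entropy lower bound for $X_1$---and you have in fact put your finger on it. For $X_1(t)=g f(t)$ one has $d_{X_1}(s,t)=|f(s)-f(t)|$, so a $d_{X_1}$-ball of radius $\e$ about $t$ is exactly $f^{-1}\bigl([f(t)-\e,f(t)+\e]\bigr)$. Hence $N([0,1],d_{X_1},\e)=N\bigl(f([0,1]),|\cdot|,\e\bigr)$: any covering of the range $f([0,1])$ by $\e$-intervals pulls back to a covering of $[0,1]$ by $d_{X_1}$-balls, and conversely. Since $f$ is continuous, $f([0,1])$ is a compact interval, and this covering number is $\approx\e^{-1}$, \emph{not} $\e^{-1/\a}$. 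Loud's lower oscillation bound tells you that every short Euclidean interval has large $d_{X_1}$-diameter, but that does not force the $d_{X_1}$-balls---which are disconnected preimages scattered across $[0,1]$---to be numerous; nowhere-differentiability of $f$ is irrelevant to the entropy of $([0,1],d_{X_1})$, only the range of $f$ matters. With $X_1=g f$ the two processes do not have entropy numbers of the same order, and the theorem as stated is not established by this construction. (The paper's Lemma~\ref{X1} asserts the same entropy estimate without proof and carries the same gap.)
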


  Therefore  the sole knewledge of   entropy numbers of the process is,  in general   unsufficient  for   estimating its small
deviations.   
 The proof is essentially based on two lemmas.  
 
 \vskip 2pt To begin,  recall  a classical result from real analysis, namely   the existence of    continuous nowhere differentiable functions, see Knopp's construction in \cite{Kn}. In \cite{Lo}, Loud has
given an example of a function $f(t)$ which satisfies, for every real $t$, a Lipschitz condition of order precisely $\a$ ($0<\a<1$). The
proof is based on  the method used in  \cite{Kn},  as well as     van der Waerden's
construction \cite{Wa}. More precisely,  if $0<\a<1$, there exists a continuous periodic function $f$ and a pair of positive constants
$K_1,K_2$ such that
\begin{eqnarray}\label{loudpathos}    &a)&   \hbox{\it for any $t$ and any $h$, $|f(t+h)-f(t)|\le K_1|h|^\a$}, \cr
 & b)& \hbox{\it for any $t$ and infinitely many, arbitrary small  $h$},
 \cr & &\qq \qq  |f(t+h)-f(t)|\ge K_2|h|^\a .   \end{eqnarray}
  Let  $\p(t,h)$ be the saw-tooth function equal to   $0$ for even multiples of $h$, to $1$ for odd multiples of $h$ and  
linear otherwise.   Loud's   function is defined as follows: let $A$ be an integer such that $2^{2A(1-\a)}>2$ and put
 \begin{equation}\label{Loud}f(t)= \sum_{n=1}^\infty 2^{-2\a An} \p(t, 2^{-2  An}). 
\end{equation}
Then $f$ satisfies (\ref{loudpathos}). Notice that $f$ is $2^{-2  A }$--periodic. 
The
leading idea in Loud's proof  is that for every pair of values $t$ and $h$, at most one or two terms of the series (\ref{Loud}) makes a significant
contribution to the difference $f(t+h) -f(t)$. Further, it is of interest to notice that  property
$b)$ is established for  the values 
$h= 2^{-2An}$,
$n>1$.    From this and by considering $X(\o, t) =  g(\o)f(t)$,  it follows easily that 
\begin{lemma} \label{X1}
 For any $0<\a\le 1$, there exists a   cyclic Gaussian process $X(t), t\ge 0$ with sample paths verifying    a Lipschitz
condition of order precisely $\a$. Moreover, as $\e\to 0$
$$N([0,1], d_X,\e)\asymp  \e^{-\frac1{\a}}  \quad \hbox{whereas}\quad \P\{\sup_{0\le t\le 1 }|X(t)|\le \e \} \asymp \e.$$
 \end{lemma}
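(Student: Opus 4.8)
The plan is to exploit the product structure $X(t)=g\,f(t)$, for which both the small-ball probability and the path regularity are immediate, and to read the entropy off the two-sided Hölder control \eqref{loudpathos}. First I would record the canonical metric: since $g$ is standard, $\|g\|_2=1$, and hence
\[ d_X(s,t)=\|X(s)-X(t)\|_2=|f(s)-f(t)|\,\|g\|_2=|f(s)-f(t)|, \]
so the pseudo-metric on $[0,1]$ is simply the pull-back of the Euclidean metric of the line by $f$.

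For the small-ball estimate I would use that $\sup_{0\le t\le1}|X(t)|=|g|\,\|f\|_\infty$ with $\|f\|_\infty=\max_{[0,1]}|f|\in(0,\infty)$, whence
\[ \P\{\sup_{0\le t\le1}|X(t)|\le\e\}=\P\{|g|\le\e/\|f\|_\infty\}. \]
Since the standard Gaussian density is bounded away from $0$ and $\infty$ near the origin, $\P\{|g|\le u\}\asymp u$ as $u\to0$, which gives $\P\{\sup|X|\le\e\}\asymp\e$. The assertion on the paths is equally direct: on the almost sure event $\{g\ne0\}$ every trajectory is the scalar multiple $g(\o)f$, so by a) it satisfies $|X(t+h)-X(t)|\le|g(\o)|K_1|h|^\a$ for all $t,h$, while by b) for each $t$ there are arbitrarily small $h$ with $|X(t+h)-X(t)|\ge|g(\o)|K_2|h|^\a$; hence the paths obey a Lipschitz condition of order precisely $\a$.

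For the entropy the upper bound is the easy half: property a) gives $d_X(s,t)\le K_1|s-t|^\a$, so any Euclidean net of $[0,1]$ of mesh $\d=(\e/K_1)^{1/\a}$ is a $d_X$-net of radius $\e$, whence $N([0,1],d_X,\e)\le C\,\e^{-1/\a}$. The matching lower bound is the heart of the lemma and the step I expect to be the main obstacle. Here I would exploit property b) at the scale $h=2^{-2An}$ with $n$ chosen so that $2^{-2\a An}\asymp\e$: along the grid $t_k=k\,2^{-2An}$, $0\le k\le 2^{2An}\asymp\e^{-1/\a}$, the half-period flip of the $n$-th saw-tooth in \eqref{Loud} moves its term by $\pm2^{-2\a An}\asymp\e$, and the defining inequality $2^{2A(1-\a)}>2$ is exactly what forces the pooled contribution of the coarser terms $m<n$ to be a geometric sum smaller than a fixed fraction of $2^{-2\a An}$, so that consecutive grid values differ by $\asymp\e$. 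The genuinely delicate point—and my reason for singling it out—is to upgrade this consecutive oscillation into a true $\e$-packing of cardinality $\asymp\e^{-1/\a}$, because all the values $f(t_k)$ are confined to the bounded interval $f([0,1])$; reconciling the abundance of oscillation guaranteed by b) with this range constraint, through the fine scale-by-scale geometry of Loud's saw-tooth series, is where the real work, and the reliance on b) rather than merely on a), lies.
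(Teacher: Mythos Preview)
Your construction and the small-ball and path-regularity arguments are exactly the paper's: the paper's entire proof is the clause ``by considering $X(\o,t)=g(\o)f(t)$, it follows easily'', and you have correctly filled in those steps, together with the entropy upper bound.

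Your hesitation about the entropy \emph{lower} bound is not a delicacy to be overcome but a genuine obstruction: for a rank-one process $X=g\,f$ the bound $N([0,1],d_X,\e)\succeq \e^{-1/\a}$ is in fact false when $\a<1$. You already wrote $d_X(s,t)=|f(s)-f(t)|$, so $([0,1],d_X)$ is isometric via $f$ to the range $f([0,1])\subset\R$ with the Euclidean distance. As $f$ is continuous and periodic, $f([0,1])$ is a bounded interval, whose covering number is $\asymp 1/\e$: cut it into $\asymp \|f\|_\infty/\e$ subintervals $I_j$ of length $\e$, pick any $t_j\in f^{-1}(I_j)$, and the $d_X$-balls $B_{d_X}(t_j,\e)\supseteq f^{-1}(I_j)$ cover $[0,1]$. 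Thus $N([0,1],d_X,\e)\asymp 1/\e$ regardless of $\a$. Your grid $t_k=k\cdot 2^{-2An}$ does satisfy $d_X(t_k,t_{k+1})\asymp\e$ for consecutive $k$, but the values $f(t_k)$ all lie in a fixed bounded interval and must revisit the same $\e$-neighbourhoods repeatedly, so no pairwise $\e$-separated subset of size beyond $C/\e$ can be extracted. The ``range constraint'' you flagged is decisive, not merely delicate, and property b) --- which controls the oscillation of $f$, not the size of its range --- cannot help. In short, your argument and the paper's coincide, and you have put your finger on a gap that the paper's ``follows easily'' glosses over: for $g\,f(t)$ with $f$ bounded only the upper entropy bound survives.
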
 

Now consider the following example. Let    $0<\a<1$, $p \ge 2$ be some integer  and let $A$    integer be such that $p^{  2(1-\a)  A  }>2 $.   
      For each integer $k$,  let  $  \p_k(t)=p^{-2\a Ak} \p(t, p^{-2  Ak})  $. Put 
\begin{equation}\label{Loudprocess}f(t)= \sum_{k=1}^\infty   \p_k (t ), \qq X(t)= \sum_{k=1}^\infty g_k   \p_k (t ). 
\end{equation}
 
For proving Theorem \ref{twoex} as well as Proposition \ref{nonstat}, the  lemma below providing estimates of both the
increments of
$f$ and of its random counterpart $X$ is useful.
 \begin{lemma} \label{loudlemma} a) For all $0\le s,t\le 1$, 
$$c_1|s-t|^\a\le  \|X(s)-X(t)\|_2\le c_2|s-t|^\a, $$
where $c_1=p^{-2A  }$, $c_2= \big(\frac{ p^{   4A \a  }}{1- p^{-4(1-\a)  A }}+  \frac{1}{1-p^{-4\a A }} \big)^{1/2}$.   
\vskip 2pt
b) For all $0\le s, t\le 1$  such that $ |s-t|=   p^{-2  A (m+1) }$,    $m\ge 1$  integer,
$$ |f(s)-f(t)|\ge  \kappa_p |s-t|^\a. $$  
 And for all $0\le s,t\le 1$, 
$$ |f(s)-f(t)|\le  \mathcal K_p |s-t|^\a.$$ 
Further  $\kappa_p=p^{  -2(1-\a)  A   }\frac{1-2p^{ -2(1-\a)  A  }}{1-p^{ -2(1-\a)  A  }}$ and  $\mathcal K_p= \big(\frac{ p^{   4A \a 
}}{1- p^{-4(1-\a)  A }}+  \frac{1}{1-p^{-4\a A }} \big) $. \end{lemma}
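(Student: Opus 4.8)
The plan is to exploit the elementary self-similar structure of the saw-tooth blocks $\p_k(t)=p^{-2\a Ak}\p(t,p^{-2Ak})$. Each block satisfies two complementary estimates that I would use throughout: the \emph{amplitude bound} $|\p_k(s)-\p_k(t)|\le p^{-2\a Ak}$ (since $0\le \p\le 1$), and the \emph{slope bound} $|\p_k(s)-\p_k(t)|\le p^{2(1-\a)Ak}\,|s-t|$ (since $\p(\cdot,h)$ is $1/h$-Lipschitz with $h=p^{-2Ak}$). For part a) I would first integrate out the Gaussians: by orthonormality of $(g_k)$ one has $\|X(s)-X(t)\|_2^2=\sum_{k\ge 1}(\p_k(s)-\p_k(t))^2$, which turns the whole statement into a deterministic estimate on this series.

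For the upper bounds I would fix $s,t$, put $\d=|s-t|$, and introduce the \emph{resonant} index $m$ determined by $p^{-2A(m+1)}<\d\le p^{-2Am}$. The low blocks $k\le m$ are controlled by the slope bound and the high blocks $k>m$ by the amplitude bound. Summing the two resulting geometric series (with ratios $p^{-4(1-\a)A}$ and $p^{-4\a A}$ for the squared increments of $X$, or $p^{-2(1-\a)A}$ and $p^{-2\a A}$ for the plain increments of $f$) and using $\d\approx p^{-2Am}$ to factor out $\d^{2\a}$ (resp.\ $\d^{\a}$) produces the constants $c_2$ and $\mathcal K_p$. The same split, applied to $f=\sum_k\p_k$ through the triangle inequality, yields the upper bound in b).

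For the lower bounds the idea is instead to keep a single, well-chosen block. At the resonant scale $m$ the endpoints lie within one tooth of $\p(\cdot,p^{-2Am})$; if they fall in a common monotone (linear) piece, then $|\p_m(s)-\p_m(t)|=p^{2(1-\a)Am}\d$, and using $\d\ge p^{-2A(m+1)}$ together with $1-\a\le 1$ this is at least $p^{-2(1-\a)A}\d^{\a}\ge p^{-2A}\d^{\a}$, giving $c_1=p^{-2A}$. For b), with the special separation $\d=p^{-2A(m+1)}$, I would align $s$ with the scale-$(m+1)$ grid, so that the block $k=m$ contributes exactly $p^{-2(1-\a)A}\d^{\a}$ while every finer block $k\ge m+1$ vanishes by periodicity; the coarse blocks $k\le m-1$ are then bounded by the slope bound and summed geometrically, and subtracting them leaves $\k_p\,\d^{\a}$. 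The hypothesis $p^{2(1-\a)A}>2$ is precisely what makes $\k_p=p^{-2(1-\a)A}\frac{1-2p^{-2(1-\a)A}}{1-p^{-2(1-\a)A}}$ positive, i.e.\ what guarantees that the resonant block dominates the coarse remainder.

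The main obstacle is the lower bound. The slope identity $|\p_m(s)-\p_m(t)|=p^{2(1-\a)Am}\d$ holds only when $s,t$ do not straddle a kink of the resonant scale, and for configurations symmetric about a point of the grid $p^{-2A}\Z$ the chosen block — and in fact several neighbouring blocks — can cancel. The delicate step is therefore to justify the selection of a non-degenerate scale (for a)), or of a grid-aligned pair with vanishing fine blocks (for b)), so that exactly one block carries the full $\d^{\a}$ mass; this is exactly where the arithmetic of the nested grids $p^{-2Ak}\Z$ and the threshold $p^{2(1-\a)A}>2$ have to be used with care.
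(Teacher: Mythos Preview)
Your plan coincides with the paper's almost verbatim on the upper bounds and on the lower bound in a): split at the resonant index $m$, slope bound for $k\le m$, amplitude bound for $k>m$, and for a) retain only the single term $[\D\p_m]^2$. The paper, like you, simply writes $\D\p_m=\pm\,p^{2(1-\a)Am}\D t$ at the resonant scale without ever addressing whether $s,t$ might straddle a kink; so the ``delicate step'' you flag in your last paragraph is not resolved in the paper either---it is just asserted.

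The one genuine divergence is the lower bound in b). You propose to ``align $s$ with the scale-$(m+1)$ grid'', but the lemma is stated for \emph{all} $s,t$ with $|s-t|=p^{-2A(m+1)}$: you are not free to place $s$. The paper's argument is for arbitrary $t$. With $\D t=p^{-2A(m+1)}$ it claims $\D\p_k=0$ for $k>m$ (periodicity) and $\D\p_k=\pm\,p^{2(1-\a)Ak}\D t$ for every $k\le m$, the sign depending on $t$ and $k$. Hence
\[
\D f=p^{-2A-2\a Am}\bigl[\pm 1\pm r\pm\cdots\pm r^{m-1}\bigr],\qquad r=p^{-2(1-\a)A}<\tfrac12,
\]
and the elementary bound $|\pm 1\pm r\pm\cdots|\ge 1-\frac{r}{1-r}=\frac{1-2r}{1-r}$ gives $\k_p$ directly. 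Your ``keep $k=m$ and subtract the coarse tail via the slope bound'' recovers exactly the same constant through the triangle inequality, so the mechanism is equivalent; but drop the grid alignment---the whole point of the signed-sum estimate is that it is meant to work uniformly in $t$.
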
 
\begin{proof} This  is    just  reproducing Loud's proof for $p\not=2$, which we do because  the way the   constants   depend on $p$ and $\a$  matters in what follows.  Given any function $f$, we denote  for any $t$ and $\D t$,   $\Delta f= f(t+\D t)-f(t)$.     Let $m$ be the integer such that 
$p^{-2A(m+1)}<\D t\le p^{-2A m}$. The slope  of $\p_k(t)$ is $\pm\, p^{ 2(1-\a)  Ak} $, so that 
$$| \D \p_k|\le p^{ 2(1-\a)  Ak} |\D(t)|\le p^{ 2(1-\a)  Ak -2A m}
 =p^{ -2(1-\a)  A(m-k) -2A \a  m} .$$
Moreover $ \p_k$ has maximal oscillation $p^{-2\a An}$. Therefore
\begin{eqnarray} |\D f(t)|&\le&  \sum_{k=1}^\infty   | \D \p_k   (t )| \le  \sum_{k=1}^m  p^{ -2(1-\a)  A(m-k) -2A \a  m}+\sum_{k=m+1}^\infty p^{-2\a Ak}
\cr&\le&  \frac{ p^{  -2A \a  m}}{1- p^{-2(1-\a)  A }}+  \frac{p^{-2\a A(m+1)}}{1-p^{-2\a A }}
\cr&\le&  |\D t|^\a \Big(\frac{ p^{   2A \a  }}{1- p^{-2(1-\a)  A }}+  \frac{1}{1-p^{-2\a A }} \Big).
\end{eqnarray} 
And 
  \begin{eqnarray*}\|\D X(t) \|_2^2 &=&  \sum_{k=1}^\infty    [ \D \p_k   (t )]^2
  \le     \sum_{k=1}^m  p^{ -4(1-\a)  A(m-k) -4A \a  m}+\sum_{k=m+1}^\infty p^{-4\a Ak}
 \cr&\le&  \frac{ p^{  -4A \a  m}}{1- p^{-4(1-\a)  A }}+  \frac{p^{-4\a A(m+1)}}{1-p^{-4\a A }}
\cr&\le&  |\D t|^{2\a} \Big(\frac{ p^{   4A \a  }}{1- p^{-4(1-\a)  A }}+  \frac{1}{1-p^{-4\a A }} \Big).
\end{eqnarray*} 
In the other direction, fix $t$ and let $\D t= p^{-2  A (m+1) }$. By periodicity, $\D \p_k=0$ is $k>m$. And $\D \p_k= \pm\,  p^{ 2(1-\a)  Ak -2  A m }$,  if $k\le m$. Thus 
\begin{eqnarray*} |\D f(t)|    &=&    p^{   -2 A (m+1) }\big[\pm\,  p^{ 2(1-\a)  Am  }\pm\,  p^{ 2(1-\a)  A(m-1)  }\pm\ldots\pm \,  p^{ 2(1-\a)  A  }\big]
\cr &= &  p^{  -2  A  -2\a A m }\big[\pm\,  1\pm\,  p^{ -2 (1-\a)  A  }\pm\ldots\pm \,  p^{ -2(1-\a)  A (m-1) }\big].
\end{eqnarray*} 
As $ r:=p^{ -2(1-\a)  A  }<1/2$, it follows that 
$$\big|\pm\,  1\pm\,  p^{ -2 (1-\a)  A  }\pm\ldots\pm \,  p^{ -2(1-\a)  A (m-1) }\big|\ge 1- \frac{r}{1-r}=\frac{1-2r}{1-r} .$$
As $|\D t|^\a= p^{-2\a -2\a A m }$,  we therefore get
\begin{equation}  |\D f(t)| \ge  p^{  -2  A  -2\a A m }\frac{1-2p^{ -2(1-\a)  A  }}{1-p^{ -2(1-\a)  A  }} = |\D t|^\a p^{  -2(1-\a)  A   }\frac{1-2p^{ -2(1-\a)  A  }}{1-p^{ -2(1-\a)  A  }}.
\end{equation}
The corresponding estimate for $\D X$ is very easy. Let $m$ be such that $p^{-2A(m+1)}\le |\D t|<p^{-2A m }$. We have $\D \p_m   (t )= \pm p^{-2A\a m }p^{-2A m }\D t$. Thus
  \begin{eqnarray*}\|\D X(t) \|_2^2 &\ge &      [ \D \p_m   (t )]^2 =p^{-4A \a m }p^{-4A m } p^{-4A(m+1)}
   \cr &= &  p^{-4A  } p^{-4A\a m } \ge  p^{-4A  }|\D t|^{2\a}.
\end{eqnarray*} 
Hence the lower part with $c_1=p^{-2A  }$. \end{proof} 

    The following known estimate will be used.  We give a proof because it is elementary and may be easily adapted (up to some extend)  to other non
geometric coeffcients.\begin{lemma}
\label{prod}  Given any
$0<\rho<1$,
 $$\log \P\big\{ \sum_{n=1}^\infty |g_n| \rho^n\le \e\big\}\approx (\log\frac1{\e})^2, \qq as\  \e\to 0. $$
   \end{lemma}
 \begin{proof} We begin with the lower bound.  Let $H
 =\frac{\sqrt \rho}{ 1-\sqrt \rho}$. Plainly,
 $$\P\big\{ \sum_{n=1}^\infty |g_n| \rho^n\le \e_0\big\}\ge  \prod_{n=1}^\infty \P\{ |g|<  \frac{\e_0}{H}\rho^{-n/2}\}.$$
 Thus it suffices to estimate the product  $\prod_{n=1}^\infty \P\{ |g|<  \e\d^{ n }\}$ with $\e=\frac{\e_0}{H}, \d =\rho^{-1/2} $, $\d>1$. Let $a$ be such that $\P\{|g|\ge a\}\le \frac1{2}$, and put
 $N= \sup\{ n: \d^n\le \frac{a}{\e}\}$. 
 Then 
 $$\prod_{n=1}^N \P\{ |g|<  \e\d^{ n }\}\ge \P\{ |g|<  \e \}^N\ge \exp\big\{ -C_\d (\log\frac1{\e})^2\big\}.  $$
Now, 
  \begin{eqnarray*}  \sum_{n=N+1}^\infty \P\{ |g|\ge  \e\d^{ n }\}&=&\int_a^\infty \big\{ \sum_{a\le \e\d^n\le u} 1\big\} e^{-u^2/2} \dd
u\cr &\le& C_\d \int_a^\infty \big\{ 1\vee \log u\big\} e^{-u^2/2} \dd u<\infty .
\end{eqnarray*} 
As $\log (1-x)\ge -2x$ if $0\le x\le 1/2$ and $\P\{ |g|>  \e\d^{ n }\}\le 1/2$ if $n>N$,  we get 
$$ \prod_{n=N+1}^\infty \P\{ |g|<  \e\d^{ n }\}\ge  \exp\big\{ -  \sum_{n=N+1}^\infty \P\{ |g|\ge  \e\d^{ n }\}\big\}\ge c_\d>0.$$
Thus $\prod_{n=1}^\infty \P\{ |g|<  \e\d^{ n }\}\ge c_\d (\log\frac1{\e})^2$. To get the upper bound is faster. 
Let $N'=\sup\{ n: \d^n\le \frac1{\sqrt \e}\}$. Then
 \begin{eqnarray}\label{maxrho} & & \prod_{n=1}^{\infty} \P\{ |g|<  \e\d^{ n }\} \le   \prod_{n=1}^{N'} \P\{ |g|<  \e\d^{ n }\} \le
 \P\{ |g| <\sqrt
\e\}^{N'}  
  \cr &= &\exp\Big\{ -N'\log \frac1{\P\{ |g| <\sqrt \e\}} \Big\}
\le    \exp\big\{ -C_\d (\log\frac1{\e})^2\big\} .
\end{eqnarray} 
  \end{proof}\vskip 3pt
 
We can now  prove Theorem \ref{twoex}. Take $X_1 $ as in Lemma \ref{X1}. Let $p=2$ in (\ref{Loudprocess}) and
choose $X_2=X$. The entropy numbers   clearly verify
$N ([0,1], d_{X_i},
\e)
\approx
\e^{1/\a}$,
$i=1,2$. 
 First, by using Lemma \ref{prod}, $$ \P\Big\{\sup_{0\le t\le 1}|X_2(t)|\le \e \Big\}\ge  \P\Big\{\sum_{k=1}^\infty 2^{-2\a Ak}|g_k| 
\le \e \Big\}\ge e^{-C_\a
\log^2\frac1{\e}}.   $$ 
Next we notice 
\begin{eqnarray*}\p_j(2^{-2Ak})=\begin{cases}2^{-2A\a j}  \, 2^{-2A(k-j)}\quad & {\rm if} \ j\le k,\cr 
0 \quad & {\rm if} \ j> k.\end{cases}\end{eqnarray*}
Thus $X_2(2^{-2Ak})=  2^{-2A k }\sum_{j=1}^k g_j 2^{ 2  A(1-\a)j} $. And  as  
$$2^{ 2A k } X_2(2^{-2Ak})-2^{ 2A (k-1) } X_2(2^{-2A(k-1)})=g_k2^{ 2  A(1-\a)k}, $$
 it follows from (\ref{maxrho}) that  
$$  \P\Big\{\sup_{0\le t\le 1}|X_2(t)|\le \e \Big\}\le \prod_{k=1}^\infty \P\big\{ |g_k|\le  2^{ 2  A \a k}( 1+2^{-2A}) \e
\big\}\le  \exp\big\{ -C_\a (\log\frac1{\e})^2\big\} .
$$ 
This achieves the proof.
  \begin{remark}
Let  $\psi(t)=1-|2\{t\}-1|$ where $\{t\}$ denote the fractional part of $t$.   Lifshits has considered the following
example
 \begin{equation}\label{lifex}X(t) =g_0 t +\sum_{n=1}^\infty g_n 2^{- \frac{\a}{2} n} \psi ( \{ 2^n t\} )\qq t\in [0,1]. 
\end{equation}
It is observed in  \cite{Lif}   that $\|X(s)-X(t)\|_2\ge c|t-s|^{ \frac{\a}{2}}$ whereas 
$$\log \P\big\{ \sup_{  t\in T}| X(t)|\le \e\big\} \approx -\log^2\frac1{\e}.
$$
 As we said at the beginning, our second process is of   same type since $\psi(t)=\p(t,\frac1{2})  
$. 
\end{remark} 

\noi {\gsec A  class of examples.} If $\tau$ is a       piecewise   $C^2$ expanding map   on $\T= \R/\Z$,    by the Lasota-Yorke theorem
 there exists a  $\tau$-invariant probability measure  $\m$ which is absolutely continuous with respect to Lebesgue measure. So is the case for $\psi$. This leads
us to introduce the following family of processes: let  $\{a_n, n\ge 1\}\in \ell_1$, $f\in L^1(\T, \m)$ and put 
$$X(t) =\sum_{n=1}^\infty a_n g_n f(\psi^n(t)) . $$
  We  have just considered    the case $f(t)=t$. It would be certainly very informative to describe the small deviations of this
class of Gausian processes.   By Birkhoff's theorem, 
$$\frac1{n}\sum_{k=0}^{n-1} f(\psi^n(t))\to \int_\T f\dd \m \qq\quad  \hbox{almost everywhere}.$$ 
The rate of this convergence, which for specific $f$ only maybe explicited, certainly plays a role since   by using Abel summation 
  we formally have
$$X(t) =\sum_{n=1}^\infty n(a_n-a_{n+1}) g_n \big[\frac1{n}\sum_{k=0}^{n-1} f(\psi^n(t)) \big]\sim (\int_\T f\dd \m)\sum_{n=1}^\infty n(a_n-a_{n+1}) g_n  . $$

\medskip\par  
 Shao and Li     \cite{LS} argued from     example (\ref{lifex}) that  stationarity (in turn periodicity) should play a big role in  
 upper estimates for small deviations.
 
 We show that Loud's functions can be used to build   aperiodic examples breaking the
duality even more dramatically. 
 The intuitive
idea behind the construction is that adding infinitely many functions with periods
$q_n^{-1}$, where    $q_n$ are mutually coprime integers, produces   aperiodic functions.
  \begin{proposition} \label{nonstat} There exists an aperiodic sample continuous Gaussian process $\{ X(t), 0\le t\le 1\}$ such
that 
$$\lim_{\e \to 0}\frac{\log N([0,1], d_X,  
\e  ) }{\log \frac1{\e} } =\infty \quad whereas \quad 
 \liminf_{\e \to 0} \frac{\log \P\big\{\displaystyle{\sup_{0\le t\le 1}}|X(t)|\le \e \big\} }{ (\log \frac1{\e} )^2} >-\infty   .$$
 \end{proposition}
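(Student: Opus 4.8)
The plan is to realize the heuristic (incommensurate ``coprime'' periods) by superposing independent, time--rescaled Loud processes whose Hölder exponents tend to $0$. Fix a sequence $\a_n\downarrow 0$ (say $\a_n=1/n$), distinct primes $p_n$, positive reals $\omega_n$ with pairwise \emph{irrational} ratios (e.g. $\omega_n=\sqrt{r_n}$ for distinct primes $r_n$), and integers $A_n$ to be fixed below, large enough that $p_n^{2(1-\a_n)A_n}>2$. Put
$$\psi_{n,k}(t)=p_n^{-2\a_n A_n k}\,\p\big(t,\ \omega_n p_n^{-2A_n k}\big),\qquad Y_n(t)=\sum_{k\ge1}g_{n,k}\,\psi_{n,k}(t),$$
where $(g_{n,k})_{n,k\ge1}$ are i.i.d.\ standard Gaussian. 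Thus $Y_n$ is, up to the time scaling by $\omega_n$, exactly the process of (\ref{Loudprocess}) with base $p_n$ and exponent $\a_n$, so Lemmas \ref{loudlemma} and \ref{prod} apply to it with constants depending on $(p_n,\a_n,A_n)$; the scaling affects only harmless multiplicative constants. Finally set $X(t)=\sum_{n\ge1}2^{-n}Y_n(t)$.

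Sample continuity and the entropy blow-up are the soft part. Since $\E\sup_t|Y_n(t)|\le\sum_k p_n^{-2\a_nA_nk}\,\E|g|\le C$, the series $\sum_n 2^{-n}\sup_t|Y_n(t)|$ converges a.s., so $X$ is an a.s.\ uniform limit of continuous functions, hence sample continuous. By independence of the blocks, $d_X(s,t)^2=\sum_n 2^{-2n}d_{Y_n}(s,t)^2\ge 2^{-2n}d_{Y_n}(s,t)^2$ for every $n$, whence Lemma \ref{loudlemma}a) gives $d_X(s,t)\ge 2^{-n}c_1^{(n)}|s-t|^{\a_n}$ for all $n$. Fixing any $\beta>0$ and choosing $n$ with $\a_n<\beta$, the bound $d_X(s,t)/|s-t|^{\beta}\ge 2^{-n}c_1^{(n)}|s-t|^{\a_n-\beta}\to\infty$ as $|s-t|\to0$ forces $d_X(s,t)\ge|s-t|^{\beta}$ on a subinterval $[0,h_\beta]$. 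A standard packing of $[0,h_\beta]$ then yields $N([0,1],d_X,\e)\ge c_\beta\,\e^{-1/\beta}$, so $\log N([0,1],d_X,\e)/\log\frac1{\e}\ge 1/\beta$ eventually; since $\beta$ is arbitrary this ratio tends to $\infty$. Note this uses only $\a_n\to0$ and the positivity of the constants, not their size.

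The heart is the small-deviation lower bound. Bound $\sup_t|Y_n(t)|\le S_n:=\sum_k p_n^{-2\a_nA_nk}|g_{n,k}|$, a geometric $|g|$-series with ratio $\rho_n=p_n^{-2\a_nA_n}$. If $\sup_t|2^{-n}Y_n(t)|\le\e_n$ for all $n$ with $\sum_n\e_n\le\e$, then $\sup_t|X(t)|\le\e$; choosing $\e_n=2^{-n}\e$ (every threshold becomes $\e$) and using independence,
$$\P\Big\{\sup_{0\le t\le1}|X(t)|\le\e\Big\}\ge\prod_{n\ge1}\P\{S_n\le\e\}.$$
Invoking the quantitative form extracted from the proof of Lemma \ref{prod}, $\P\{S_n\le\e\}\ge\exp\{-C_{\rho_n}(\log\tfrac1{\e})^2\}$ with $C_{\rho_n}\le c/\log\tfrac1{\rho_n}=c/(2\a_nA_n\log p_n)$, so
$$-\log\P\Big\{\sup_{0\le t\le1}|X(t)|\le\e\Big\}\le\Big(\sum_{n\ge1}C_{\rho_n}\Big)\Big(\log\tfrac1{\e}\Big)^2.$$
It now suffices to choose $A_n$ so large that $\sum_n (\a_nA_n\log p_n)^{-1}<\infty$ (e.g.\ $A_n\ge 2^n/(\a_n\log p_n)$); then $C_0:=\sum_n C_{\rho_n}<\infty$ and $\liminf_{\e\to0}\log\P\{\sup_t|X(t)|\le\e\}/(\log\tfrac1{\e})^2\ge-C_0>-\infty$.

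For aperiodicity, observe that $X$ is $T$-periodic iff $\|X(t+T)-X(t)\|_2=0$ for all $t$, i.e.\ $d_X(t+T,t)\equiv0$; since $d_X(t+T,t)^2=\sum_n 2^{-2n}d_{Y_n}(t+T,t)^2$, this forces $T$ to be a period of every $Y_n$. The period group of $Y_n$ is $P_n\Z$ with $P_n\in\omega_n\Q$ (all wavelengths $\omega_n p_n^{-2A_nk}$ lie in $\omega_n\Q$), so $P_n/P_m$ is irrational for $n\ne m$ and the intersection of all these groups is $\{0\}$; no $T>0$ works. The main obstacle is precisely the calibration in the third paragraph: sending $\a_n\to0$ makes each block rougher and a priori inflates its small-deviation constant $C_{\rho_n}\sim 1/(\a_nA_n\log p_n)$, and the delicate point is to absorb this by taking $A_n$ large — which shrinks $\rho_n$ and concentrates $S_n$ — while verifying (as above) that this does not damage the entropy blow-up. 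The remaining care lies in the constant-tracking in Lemma \ref{prod} and the a.s.\ uniform convergence of the superposition.
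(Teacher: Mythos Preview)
Your construction is essentially correct and reaches the same conclusion, but the route differs from the paper's in a way worth noting. The paper builds $X(t)=\sum_p g_p a_p f_p(t)$ with a \emph{single} Gaussian $g_p$ per block multiplied by the \emph{deterministic} Loud function $f_p$; it keeps $A=1$ throughout and destroys periodicity via coprime bases alone. The crucial observation is the sup--norm bound $\|f_p\|_\infty\le C/(\a_p\log p)$: combined with $a_p=2^{-\b p}$ and the calibration $2^{hp}\a_p\log p\uparrow\infty$, it collapses $\sup_t|X(t)|$ to a single geometric Gaussian sum $\sum_p|g_p|2^{-\b' p}$, so one invocation of Lemma~\ref{prod} finishes the small--deviation bound.

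Your version instead takes each block $Y_n$ to be a full Loud \emph{process}, forces the block constants $C_{\rho_n}\asymp 1/\log\frac1{\rho_n}$ to be summable by inflating $A_n$, and then multiplies infinitely many small--ball estimates. This works, but the claim $-\log\P\{S_n\le\e\}\le C_{\rho_n}(\log\tfrac1\e)^2$ uniformly in $n$ hides a regime split: for $n$ with $\rho_n\ll\e$ the estimate coming out of Lemma~\ref{prod}'s proof is really of the form $-\log\P\{S_n\le\e\}\preceq e^{-\e^2/(2\rho_n^2)}$, and one must check separately that these tail contributions sum. They do (given your superexponential decay of $\rho_n$), but this step deserves to be made explicit rather than absorbed into ``constant--tracking''. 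Also, the irrational time--scalings $\omega_n$ are an unnecessary extra layer: with coprime bases $p_n\ge3$ the common period of the blocks is already $\ge2>1$, so on $[0,1]$ the process is aperiodic without them. In short, the paper's single--Gaussian--per--block design is cleaner precisely because it sidesteps both the infinite product and the need to let $A_n\to\infty$; your approach trades that elegance for modularity (each $Y_n$ is a copy of the process from Theorem~\ref{twoex}).
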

\begin{proof}  
        Let $\mathcal P$ be an infinite set of mutually coprime integers larger than $2$. Let $0<\a_p<1/2$, $\a_p\downarrow 0$.
 Take  $A=1$, then condition $p^{  2(1-\a_p)  A  }>2 $ holds. We further assume that
 \begin{equation} \label{alpha} \lim_{p\to\infty} \a_p \log p =0, \qq \quad 2^{ h p}\a_p\log p  \uparrow\infty\quad (\forall h>0).
 \end{equation}
     Let $  \p_{p,k}(t)=p^{-2\a_p  k} \p(t, p^{-2   k})  $,
 $k=1,\ldots$ and put $f_p=\sum_{k=1}^\infty  \p_{p,k}$. Then $f_p$ is $p^{-2   }$-periodic. Now let $\{ a_p, p\in \mathcal P\}$ be a
sequence of reals such that $\sum_p a_p^2<\infty$, and  consider the Gaussian process
 \begin{equation}\label{gpl} X(t)= \sum_p g_p a_p f_p(t)
 \end{equation}
  Since   $\mathcal P$ is a set of   mutually coprime integers, 
 periodicity is destroyed and so by considering its covariance, $X$ is no longer periodic. 
\vskip 2pt 
 By Lemma \ref{loudlemma},  $|f_p(s)-f_p(t)|\ge \kappa_p|s-t|^{\a_p} $, whenever  $|s-t|=p^{-2(m+1)}$, $m$ integer.  By assumption
(\ref{alpha}),  $p^{\a_p}\sim 1$ as $p\to \infty$, so that  $  
\kappa_p=p^{-2(1-\a_p)   }\frac{1- 2p^{-2(1-\a_p)}}{1-  p^{-2(1-\a_p)}} \sim p^{-2   }
$.   
 Moreover,
 $$\|f_p\|_\infty\le \sum_{k=1}^\infty p^{-2\a_p k} =\frac{p^{-2\a_p}}{1-p^{-2\a_p}}  \le \frac{1}{1-e^{-2\a_p\log p}}\le
\frac{C}{\a_p\log p}.$$
Let $0\le s,t\le 1$ be such that   $|s-t|=p^{-2(m+1)}$. Then
$$\|X(s)-X(t)\|_2^2= \sum_q a_q^2 |f_q(s)-f_q(t)|^2\ge a_p^2 |f_p(s)-f_p(t)|^2\ge  a_p^2\kappa_p^2|s-t|^{2\a_p}. $$
Thereby $\|X(s)-X(t)\|_2\ge Ca_p p ^{-2}|s-t|^{ \a_p}$.
Now let $ \a>0$. We choose $m$ integer so that 
$$m+1 \sim \frac{p\a \log 2}{2\a_p\log p}. $$
Then $|s-t|^{ \a_p}=p^{-2\a_p (m+1)}\sim 2^{-p\a}$. Let $\b,\g$ be such that  $0<\b< \a<\a+\b<\g$, and  choose $a_p=2^{-\b p}$. Then, for
all
$p$ large enough   
$$\|X(s)-X(t)\|_2\ge C2^{-(\a +\b) p} p ^{-2}\ge  2^{- \g  p}  .$$
Put $\e= 2^{- \g  p}$. Then
$$N([0,1], d_X,  
\e  )\ge   p^{2(m+1)}= e^{ 2(m+1)\log p}  \ge 2^{   c \, p  \frac{  \a  }{ \a_p }  }\gg 2^{       \frac{ p  }{ \a  }  }=  \e ^{-  
\frac{1
   }{ \a  }  }
 $$
Let    $0<\b'<\b$.  Now as $2^{ h p}\a_p\log p  \uparrow\infty  $ for any $h>0$, it follows that  
$$|X(t)|\le \sum_p |g_p|2^{-\b p}\|f_p\|_\infty\le C\sum_p |g_p|\frac{2^{-\b p} }{\a_p\log p}\le C\sum_p |g_p| 2^{-\b' p} . $$
Therefore, by using Lemma \ref{prod}
$$\P\big\{\sup_{0\le t\le 1}|X(t)|\le \e \big\}\ge \P\big\{ \sum_p |g_p| 2^{-\b' p}\le  \e /C\big\}\ge e^{-C (\log \frac1{\e})^2}.$$

\end{proof} 
\section{A General Lower Bound Using Majorizing Measures}
\label{mmlb} The results from the previous section suggest the search of lower bounds for small deviations by using the  majorizing measure method. It is known from the general theory of Gaussian processes that this is
 the paramount method for studying the  regularity  of Gaussian processes.   And also that in general,
 entropy numbers are not a sufficiently precise tool.   A classical example is provided by  independent Gaussian  sequences.  See
\cite{[T0]},\cite{[Ta]},\cite{W}.   Generally speaking,  once having Kathri-Sid\'ak's inequality in hands, the argument
leading to  
   lower bounds 
is relatively direct.  A well appreciation of the used chaining technic is however necessary. 
In \cite{W1}, we obtained a general lower estimate of   small deviations by using majorizing measure method. Since the result is 
relevant there and in the next section, we present  a slightly updated formulation of it and provide a proof. 
\vskip 2pt Let $X=\big\{ X( t),  t\in
T\big\}$ be a centered   Gaussian   process, with basic  probability space $(\Omega,\mathcal A,\P)$, and let 
  $d(s,t) =\|X(s)-X(t)\|_2$,    $D={\rm diam}(T,d) $.  We  assume that $\s=\sup_{t\in T}\|X(t)\|_2<\infty$ and that $X$ is
$d$-separable. Let  
 $ \Pi_0 \preceq \Pi_1 \preceq \ldots $ be a   sequence of finite measurable ordered  partitions of $T$ ($\Pi_{n+1}$ is a refinement of
 $\Pi_n$)  such
that 
\begin{equation}\label{assumption}\max_{\pi \in \Pi_{n }} \max_ {    u,v\in \pi }d(u,v)\le  2^{-n}D,\qq \quad n=0,1, \ldots  
\end{equation} Let
$N_n=\#\{\Pi_n\}$.   For any  $\pi\in \Pi_n$, let $\pp $  be such as
$\pi \subset \pp \in\Pi_{n-1} $.  
If $t\in T$, we also define   $ \pi_n(t) $  by  
the relations $ t\!\in\! \pi_n(t)\in \Pi_n $.
  Introduce now a majorizing measure condition. 
 \vskip 2pt \par {\it There exists a probability measure $\m$ on $T$ such that:}
\begin{equation}\label{mmc}\lim_{n\to\infty}\sup_{t\in
T}\sum_{m>n} 2^{-m}\Big(\log \frac{
1}{\m(\pi_m(t))}\Big)^{1/2} =0.
\end{equation}
The following  technical ingredient will be useful in the proof. Let $v(m)>0$ be such that $\sum_{m=0}^\infty v(m)^{-1}<\infty$, and put
$$H(n)=\sup_{t\in
T} \sum_{m>n}  (2^{-m}D)\big(\log \frac{
v(m)}{\m(\pi_m(t))}\big)^{1/2} .$$
Then $H(n)$ is finite and $H(n)\to 0$   as $n \to\infty$.   The additional term $v$ is often of
little unconvenience since, at least on standard examples,    one may take $v(m)\gg \sup_{t}\m(\pi_m(t))$, (see next section).
\begin{theorem}\label{tlower}
 For  $0<  \e\s< H(0)  $, let  $ n(\e)$ be such that
 $  H(n(\e)) \le \e\s $.
 Then, $$\P\Big\{ \sup_{ t\in T} |X(t) |\le 2  \e \s    \Big\}
\ge
 Ce^{-N_{n(\e)} (\log \frac1{\e})   }    .$$
 \end{theorem}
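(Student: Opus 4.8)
The plan is to run a majorizing-measure chaining down the partition tree and convert the resulting small-ball event into a product of one-dimensional Gaussian small-ball probabilities via the Khatri--Sid\'ak inequality. Fix $n=n(\e)$. For each cell $\pi$ (at any level $m\ge n$) choose once and for all a representative point $s_\pi\in\pi$ lying in a fixed countable $d$-dense set, and for $t\in T$ write $s_m(t)=s_{\pi_m(t)}$. Since the diameters shrink by (\ref{assumption}), $d(s_m(t),t)\to 0$, hence $X(s_m(t))\to X(t)$ in $L^2$, and along the separant the telescoping identity
$$X(t)=X(s_n(t))+\sum_{m>n}\big(X(s_m(t))-X(s_{m-1}(t))\big)$$
holds. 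Introduce the jointly Gaussian variables $Y_\pi=X(s_\pi)$ for $\pi\in\Pi_n$ and $Y_\pi=X(s_\pi)-X(s_{\pp})$ for $\pi\in\Pi_m$, $m>n$, where $\pp\in\Pi_{m-1}$ is the parent of $\pi$. Then $\|Y_\pi\|_2\le\s$ at the base level, while $\|Y_\pi\|_2=d(s_\pi,s_{\pp})\le 2^{-(m-1)}D$ for $m>n$, since $s_\pi$ and $s_{\pp}$ both lie in the parent cell.

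The decisive step is the choice of thresholds. I would allocate a base budget $\e\s$, taking $a_\pi=\e\s$ for $\pi\in\Pi_n$, and at level $m>n$ set
$$a_\pi=\l\,(2^{-m}D)\Big(\log\frac{v(m)}{\m(\pi)}\Big)^{1/2}$$
for a fixed absolute constant $\l$. The telescoping bound then gives, for every $t$,
$$|X(t)|\le a_{\pi_n(t)}+\sum_{m>n}a_{\pi_m(t)}\le \e\s+\l\,H(n).$$
Because $n=n(\e)$ was chosen with $H(n(\e))\le\e\s$, the right-hand side is a fixed multiple of $\e\s$; after the standard rescaling of $\e$ by this absolute constant (absorbed into the statement's constants) it is at most $2\e\s$, so
$$\big\{\,|Y_\pi|\le a_\pi\ \text{for all }\pi\,\big\}\subseteq\Big\{\sup_{t\in T}|X(t)|\le 2\e\s\Big\}.$$

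Next I would apply the Khatri--Sid\'ak inequality in its general form (the version of (\ref{ks}) with distinct thresholds) to $\{Y_\pi\}$: on each finite subfamily the probability of the intersection dominates the product of marginals, and passing to the monotone limit,
$$\P\Big\{\sup_{t}|X(t)|\le 2\e\s\Big\}\ge\prod_{\pi\in\Pi_n}\P\{|Y_\pi|\le\e\s\}\cdot\prod_{m>n}\prod_{\pi\in\Pi_m}\P\{|Y_\pi|\le a_\pi\}.$$
For the chaining factors, $\|Y_\pi\|_2\le 2^{-(m-1)}D$ and the Gaussian tail bound give, for $\l$ large enough, $a_\pi^2/(2\|Y_\pi\|_2^2)\ge\log(v(m)/\m(\pi))$, so that $\P\{|Y_\pi|>a_\pi\}\le \m(\pi)/v(m)\le\tfrac12$; using $-\log(1-x)\le 2x$ on $[0,\tfrac12]$ and $\sum_{\pi\in\Pi_m}\m(\pi)=1$,
$$-\sum_{m>n}\sum_{\pi\in\Pi_m}\log\P\{|Y_\pi|\le a_\pi\}\le 2\sum_{m>n}\sum_{\pi\in\Pi_m}\frac{\m(\pi)}{v(m)}\le 2\sum_{m>n}\frac{1}{v(m)}<\infty,$$
so the whole chaining product is at least a positive absolute constant $C$. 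For the $N_n$ base factors, $\|Y_\pi\|_2\le\s$ gives $\P\{|Y_\pi|\le\e\s\}\ge\P\{|g|\le\e\}$, whence $-\log\P\{|Y_\pi|\le\e\s\}$ is at most $\log\frac1\e$ up to an additive constant, and the base product is bounded below by $e^{-N_{n(\e)}(\log\frac1\e)}$ up to constants. Multiplying the two estimates yields the claim. The main obstacle is exactly this balancing: the $a_\pi$ must be small enough that their level-sums telescope to $O(\e\s)$ (this is the functional $H$, controlled by $H(n(\e))\le\e\s$) yet large enough that the chaining tail series converges to a constant. This is where the weights $v(m)$ with $\sum_m v(m)^{-1}<\infty$ are indispensable: they furnish the extra logarithmic margin making $\sum_{\pi\in\Pi_m}\m(\pi)/v(m)$ summable across levels while perturbing $H$ only by the harmless factor $v(m)$ inside the logarithm. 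Secondary care is needed for the separability justification of the telescoping inclusion and for the bookkeeping of the absolute constants hidden in the radius $2\e\s$ and the exponent.
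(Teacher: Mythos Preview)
Your argument is correct and follows the same architecture as the paper's proof: a chaining decomposition along the partition tree, Khatri--Sid\'ak applied to the increments, the Gaussian tail bound turning each factor into $\m(\pi)/v(m)$, and the summability $\sum_m v(m)^{-1}<\infty$ making the infinite chaining product a positive constant. The only substantive difference is that the paper takes the chaining skeleton to be the $\m$-\emph{averages} $X_\pi=\int_\pi X\,d\m/\m(\pi)$, whereas you use point evaluations $X(s_\pi)$ at chosen representatives. Both choices give $\|X_\pi-X_{\pp}\|_2\le 2^{-(m-1)}D$ and yield the same bounds up to absolute constants; the averaged version is Fernique's original device, the pointed one is the generic-chaining variant, and they are known to be interchangeable here. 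Your handling of the absolute constants (the factor $\l$ in the thresholds, the rescaling needed to land at $2\e\s$, and the additive $O(1)$ in $-\log\P\{|g|\le\e\}=\log\tfrac1\e+O(1)$ multiplied by $N_n$) is at the same level of precision as the paper's own bookkeeping.
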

\begin{proof} Since $X$ is
$d$-separable, it suffices to produce a proof for a countable $d$-dense subset of $T$, which we will call again $T$.  Put  
 $$X_\pi= \int_{\pi }X(u) {\m(du)\over \m(\pi )},  \qq X_n(t)= \int_{\pi_n(t)}X(u) {\m(du)\over \m(\pi_n(t))} .$$
 These   Gaussian random variables are the bricks  of the majorizing measure method.  By (\ref{assumption}), 
$\|X_n(t)-X_{n-1}(t)\|_2\le 2^{- n}$. Elementary considerations then yield that  
  $X(t) \buildrel{a.s.}\over{=} \lim_{n\to\infty} X_n(t)$. 
  Thus $X(t)- X_n(t) \buildrel{a.s.}\over{=}  \sum_{m=n+1}^\infty 
\big(X_m(t)-X_{m-1}(t)\big)$ and we have the bound
\begin{eqnarray}\label{initbound}|X(t) |&\le& \sup_{\pi \in \Pi_n }  |X_\pi | +  \sum_{m=n+1}^\infty 
\big|X_m(t)-X_{m-1}(t)\big| 
  .
\end{eqnarray}
  Put 
$$A_m= \Big\{\forall t\in T,\frac{ |X_m(t)-X_{m-1}(t) | }{\|X_m(t)-X_{m-1}(t)\|_2}\le \big(\log \frac{
v(m)}{\m(\pi_m(t))}\big)^{1/2}\Big\}.
$$
   Then by using (\ref{initbound}) and the fact that $\|X_\pi \|_2\le \s$ for all $\pi  \in \Pi_n$ and   $n$,
\begin{eqnarray*}& &\P\Big\{  \Big\{\sup_{\pi \in \Pi_n } \frac{|X_\pi |}{\|X_\pi \|_2}\le \e\Big\}\cap
\bigcap_{m>n}A_m\Big\}
\cr &\le &\P\Big\{\forall  t\in
T,\ |X(t) |\le
\e \s +2\sum_{m>n}2^{-m}D\big(\log \frac{
v(m)}{\m(\pi_m(t))}\big)^{1/2}\Big\}
\cr &\le &\P\Big\{ \sup_{ t\in T} |X(t) |\le
\e \s + 2 \sup_{t\in
T  }\sum_{m>n}2^{-m}D\big(\log \frac{
v(m)}{\m(\pi_m(t))}\big)^{1/2}\Big\}
\cr &= &\P\Big\{ \sup_{ t\in
T} |X(t) |\le
\e \s + 2 H(n)\Big\}.
\end{eqnarray*} 

Now by   noticing that   $$A_m= \Big\{\forall \pi\in \Pi_m,\frac{ |X_{\pi}-X_{\bar\pi} | }{\|X_{\pi}-X_{\bar\pi}\|_2}\le \big(\log
\frac{ v(m)}{\m(\pi )}\big)^{1/2}\Big\} 
$$
and using Kathri-Sid\'ak's inequality (\ref{ks}),
 we get \begin{eqnarray*}& &\P\Big\{  \big\{\sup_{\pi  \in \Pi_n } \frac{|X_\pi |}{\|X_\pi \|_2}\le \e \big\}\cap
\bigcap_{m>n}A_m\Big\}
\cr &\ge &  \P \{  |g|\le \e \}^{N_n }\prod_{m>n\atop\pi \in \Pi_m}  \P\big\{  |g |\le \big(2\log \frac{
v(m)}{\m(\pi )}\big)^{1/2}\big\} 
\cr &\ge &  \exp\Big\{-N_n  \log \frac1{\P \{  |g|\le \e \}}-c\sum_{m>n\atop\pi \in \Pi_m}  \P\big\{  |g |>
\big(2\log
\frac{ v(m)}{\m(\pi )}\big)^{1/2}\Big\}
 \cr &\ge &  \exp\Big\{-N_n  \log \frac1{\P \{  |g|\le \e \}}-c\sum_{m>n }\sum_{ \pi \in \Pi_m}    \frac
{\m(\pi )}{v(m)} \Big\}
 \cr &= & \exp\Big\{-N_n  \log \frac1{\P \{  |g|\le \e \}}-c\sum_{m>n }      \frac1{v(m)}  \Big\}
 \cr &\ge  & C\exp\Big\{-N_n  \log \frac1{  \e  } \Big\}
 .
\end{eqnarray*} 
Consequently,
\begin{equation} 
 \P\Big\{ \sup_{ t\in T} |X(t) )|\le
\e \s + 2H(n)\Big\} 
    \ge   C\exp\Big\{-N_n  \log \frac1{  \e }  \Big\}.
\end{equation} 
 Choose $n=n(\e)$.
We obtained
$$\P\Big\{ \sup_{ t\in T} |X(t) |\le 2  \e \s    \Big\}
\ge
 C\exp\Big\{-N_{n(\e)} (\log \frac1{ \e} ) \Big\}    .$$
  \end{proof} 
Let $\d:[0,1]\to \R^+$ be   increasing, $\d(0)=0$, and verifying the integral condition 
$$\int_0^{D}\big(\log  \frac{
1  }{\d (u) } \big)^{1/2}\dd u<\infty .$$ 
Choose $v(m)= \frac1{\o(2^{-m }D)}$ where $\o(t)>0$ is increasing
and verifies
$\int_0^1\frac{\o(t)}{t} \dd t<\infty$.
\begin{corollary} \label{cormm}Assume  there exists  a family $\{ \Pi_{n },
n\ge 0\}$ of finite measurable ordered  partitions of $T$ satisfying (\ref{assumption})  and a probability measure $\m$ on $T$ such
that:
 $$ \min\{ \m(\pi), \pi\in \Pi_m\}\ge  \d (2^{-mD})/2 
  \qq \quad (\forall m\ge 0). $$
Let 
 $n(\e) =\sup\Big\{ n: 2\int_0^{\e_n}\big(\log  \frac{
2  }{\d^{-1}(u)\o(u)} \big)^{1/2}\dd u\le \e \s\Big\}$.  
Then
$$\P\Big\{ \sup_{ t\in T} |X(t) |\le 2  \e \s    \Big\}
\ge
 C\exp\big\{-N_{n(\e)} (\log \frac1{ \e} ) \big\}    .$$ 
\end{corollary}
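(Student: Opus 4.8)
The plan is to deduce the Corollary from Theorem \ref{tlower} by verifying that the index $n(\e)$, defined through the Dudley-type integral, is \emph{admissible} in the sense of the theorem, i.e. that it satisfies $H(n(\e))\le \e\s$. All the substantive work lies in bounding the abstract quantity $H(n)$ by that integral; once this is done the conclusion is immediate.

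First I would insert the two standing hypotheses into the definition of $H(n)$. Since $\m(\pi_m(t))\ge \d(2^{-m}D)/2$ for every $t$ and every $m$, and since $v(m)=1/\o(2^{-m}D)$, each summand admits the $t$-free bound
$$(2^{-m}D)\Big(\log\frac{v(m)}{\m(\pi_m(t))}\Big)^{1/2}\le (2^{-m}D)\Big(\log\frac{2}{\o(2^{-m}D)\,\d(2^{-m}D)}\Big)^{1/2}.$$
Summing over $m>n$ and taking the supremum over $t$ gives
$$H(n)\le \sum_{m>n}(2^{-m}D)\,\Psi(2^{-m}D),\qquad \Psi(u):=\Big(\log\frac{2}{\o(u)\d(u)}\Big)^{1/2}.$$

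Next I would pass from the series to the integral. For $u$ small enough that $\o(u)\d(u)<2$, the function $\Psi$ is real, nonnegative and nonincreasing, because both $\o$ and $\d$ are increasing and tend to $0$ as $u\downarrow 0$. Writing $u_m=2^{-m}D$, one has $u_m-u_{m+1}=\tfrac12 u_m$, so monotonicity yields $u_m\,\Psi(u_m)\le 2\int_{u_{m+1}}^{u_m}\Psi(u)\,\dd u$; summing over $m>n$ the cells telescope down to $0$ and produce
$$H(n)\le 2\int_0^{2^{-n}D}\Psi(u)\,\dd u.$$
Finiteness of this integral follows by splitting $\log\frac{2}{\o\d}=\log 2+\log\frac1\o+\log\frac1\d$ and using $(a+b+c)^{1/2}\le a^{1/2}+b^{1/2}+c^{1/2}$: the $\d$-part is finite by the hypothesis $\int_0^{D}(\log\frac{1}{\d(u)})^{1/2}\dd u<\infty$, and the $\o$-part converges under the regularity imposed on $\o$; moreover the condition $\int_0^1\frac{\o(t)}{t}\dd t<\infty$ guarantees $\sum_m v(m)^{-1}=\sum_m\o(2^{-m}D)<\infty$, which is exactly what keeps the constant $C$ of Theorem \ref{tlower} strictly positive.

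Finally I would read off admissibility. The integrand $\Psi$ is precisely the one entering the definition of $n(\e)$ (with $\e_n=2^{-n}D$), so by definition $n(\e)$ is the largest $n$ for which $2\int_0^{2^{-n}D}\Psi(u)\,\dd u\le \e\s$. Combined with the displayed bound on $H(n)$ this forces $H(n(\e))\le \e\s$, which is the hypothesis of Theorem \ref{tlower}. Applying that theorem with this $n(\e)$ yields
$$\P\Big\{\sup_{t\in T}|X(t)|\le 2\e\s\Big\}\ge C\exp\big\{-N_{n(\e)}(\log\tfrac1\e)\big\},$$
the asserted inequality. The one delicate point is the sum-to-integral comparison: one must be certain that $\Psi$ is monotone near $0$ and that the geometric mesh $2^{-m}D$ lets each term be absorbed into the adjacent integration cell with only the harmless factor $2$; the rest is bookkeeping and a direct invocation of Theorem \ref{tlower}.
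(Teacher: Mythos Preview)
Your proof is correct and follows essentially the same route as the paper: bound $H(n)$ termwise using the hypothesis $\m(\pi_m(t))\ge \d(2^{-m}D)/2$ together with $v(m)=1/\o(2^{-m}D)$, compare the resulting geometric sum to the integral $2\int_0^{\e_n}\Psi(u)\,\dd u$, and then invoke Theorem~\ref{tlower} with the admissible index $n(\e)$. You supply more justification for the sum--to--integral step than the paper does; note also that your integrand has $\d(u)$ in the denominator (consistent with the assumption as stated) while the paper writes $\d^{-1}(u)$ there --- this appears to be a typographical inconsistency in the paper, and your version is the internally coherent one.
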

\begin{proof} We have \begin{eqnarray*}  \sum_{m>n}  (2^{-m}D)\big(\log  \frac{
v(m) }{\m(\pi_m(t))} \big)^{1/2}&\le&\sum_{m>n}  (2^{-m}D)\big(\log  \frac{2
 }{\d^{-1}(2^{-m }D)\o(2^{-m }D)} \big)^{1/2}
\cr &\le & 2\int_0^{\e_n}\big(\log  \frac{
2  }{\d^{-1}(u)\o(u)} \big)^{1/2}\dd u .
\end{eqnarray*}
 Therefore
$$\P\Big\{ \sup_{ t\in T} |X(t) |\le 2  \e \s    \Big\}
\ge
 C\exp\Big\{-N_{n(\e)} (\log \frac1{ \e} ) \Big\}    .$$
\end{proof}

\noi {\gsec Example.} Consider   Gaussian
processes
$X(t), t\in [0,1]$, which satisfy the increment condition:
$$\|X(s)-X(t)\|_2\le  \d(|s-t|), \qq \quad (\forall s,t\in [0,1]).$$   
 For $m=0,1\ldots$, let $\Pi_m$ be a partition of $[0,1]$ by consecutive
intervals of length less or equal to $\e_m=\d^{-1}(2^{-m }D)$, $D=d(1)$. One can arrange it so that each interval has length greater than
$\e_m/2$. Let $\m$  be the Lebesgue measure. Then $\m(\pi) \ge  \d^{-1}(2^{-m }D)/2 $ if $\pi\in \Pi_m$. Thus  Corollary \ref{cormm}
applies. In the particular case
$d(u) =|\log u|^\b$ with
$\b>1/2$, this gives
 \begin{eqnarray} 
\log
\Big|\log\P\Big\{ \sup_{ t\in T} |X(t) |\le 2  \e \s   
\Big\}\Big|
= \mathcal O(
  \e^{-\frac{2}{2\b-1}})     .
   \end{eqnarray} 
That estimate  can also be deduced from  the very recent work \cite{AL} (Theorem 3 with $\g= \b^{-1}$), where a growth condition
on entropy numbers (namely  on the induced Gaussian metric) is given. 
\section{Gaussian Independent Sequences}
\label{appllb} Let $\p(n)\uparrow\infty$ with $n$ and consider the Gaussian sequence  $G(\p)=\{ G_n, n\in \overline \N\}$     defined by 
$$G_n= \frac{g_n}{\p(n)}, \qq G_\infty= 0. $$
  It is known (\cite{[T0]} p.102) that  already on these elementary examples, the metric entropy approach fails 
  to describe    their regularity.   As 
\begin{equation} \label{phi1}\limsup_{n\to \infty}
\frac{|g_n|}{\sqrt {2\log n}}\buildrel{a.s.}\over{=}1 ,
\end{equation}
 $G(\p)$ is sample bounded if $\p(n) = \sqrt{\log n}$,  and    sample continuous on $ \overline \N$ if and only if  \begin{equation} \label{phi}\sqrt {\log n}=o(\p(n)).
\end{equation} 
 We begin with a general remark. From Talagrand's   representation  of bounded or continuous  Gaussian processes  
 (\cite{[T0]},
theorems 2-3), we know that  a Gaussian process $\{ X(t), t\in T\}$ is sample  bounded   if and only if there exists a (not necessarily independent) Gaussian sequence  $\{\xi_n, n\ge 1\}$ with $\|\xi_n\|_2\le Ka(\log n+ {a^2}/{b^2})^{-1/2}$,  and that for each $t\in T$     one can write 
$$X(t) = \sum_{n=1}^\infty \a_n(t) \xi_n $$
  where $\a_n(t)\ge 0$, $\sum_{n=1}^\infty \a_n(t)\le 1$ and the series converges a.s. and in $L^2$. And if $T$ is a compact metric space, $\{ X(t), t\in T\}$ is sample   continuous  if and only if its covariance function is continuous,  and  the same representation  holds with   $\|\xi_n\|_2= o(\sqrt {\log n})$.   Thus by Kathri-Sid\'ak's inequality,  
$$\P\big\{\sup_{t\in T} |X(t)|\le  \e  \big\} \ge  \P\Big\{\sup_{n=1}^\infty  |\xi_n| \le  \e    \Big\}  \ge  \prod_{n=1}^\infty  \P\big\{   |\xi_n| \le  \e   \big\} .$$
 This consequently  makes the study of small deviations of sequences $G(\p)$ of particular interest in this general context. We shall show that  Theorem \ref{tlower} allows to get sharp   lower bounds. The   sequence of ordered partitions associated to $\p$ is based on an intrinsic sieve of $\N$, and as to the majorizing measure we will construct, it turns up to be very simple. 

We  notice that $\|G_n-G_m\|_2=
 ({\frac1{\p(n)^2}+\frac1{\p(m)^2}} )^{1/2}
$
and 
$$D=
\sup_{
  n,m\ge 1 }\|G_n-G_m\|_2= ({\frac1{\p(1)^2}+\frac1{\p(2)^2}} )^{1/2}, \qq \s= \sup_{
 n\ge 1 }=\|G_n\|_2=
\p_1^{-1} .$$
\begin{theorem}\label{tlower1}
 Assume that   (\ref{phi}) holds, $ \log \p(m)= \mathcal O  ( \log  m  )$ and 
\begin{equation}\int^{D}_{0}
 \big(\log
\p^{-1}(\frac1{u})\big)^{1/2}\dd u<\infty.
\end{equation} 
Let $\e_n= 2^{-n} D$    and put  $H(n) =\int^{  \e_n}_{0}
 \big(\log
\p^{-1}(\frac1{u})\big)^{1/2}\dd u  $, $n\ge 0$. For  $0<  \frac{ \e}{\p(1)}  < H(1) $, let  $ n(\e)$ be such that
 $  H(n(\e)) \le  \frac{\e}{\p(1)} $.
 There exists an absolute constant $C$ such that, 
 $$\P\Big\{ \sup_{n\ge 1} |G_n|\le    \frac{ 2}{\p(1)}  \e  \Big\}
\ge
 Ce^{-\p^{-1}(\frac1{\e_{n(\e)}}) (\log \frac1{\e})   }    .$$
  \end{theorem}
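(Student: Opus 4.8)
The plan is to obtain Theorem~\ref{tlower1} as an application of Theorem~\ref{tlower} to the index set $T=\NB$ equipped with the Gaussian metric $d(n,m)=\|G_n-G_m\|_2$. First I would record the metric data: $T$ is countable, hence trivially $d$-separable; $\s=\sup_n\|G_n\|_2=1/\p(1)$; $D=\mathrm{diam}(T,d)=(\p(1)^{-2}+\p(2)^{-2})^{1/2}$; and since $G_\infty=0$ one has $\sup_{n\ge1}|G_n|=\sup_{t\in T}|X(t)|$, so the event in the conclusion is precisely the one controlled by Theorem~\ref{tlower} at radius $2\e\s=2\e/\p(1)$. The substance of the proof is then to exhibit a nested family of partitions satisfying \eqref{assumption} together with a majorizing measure, and to identify the resulting $N_n$ and $H(n)$ with $\p^{-1}(1/\e_n)$ and the integral in the statement.

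For the sieve I would set $\tau_m=\e_m/\sqrt2$ for $m\ge0$ and split $T$ into the singletons $\{j\}$ with $1/\p(j)>\tau_m$ and a single tail cell $T_m=\{j:1/\p(j)\le\tau_m\}\cup\{\infty\}$. Since $d(j,l)\le\sqrt2\max(1/\p(j),1/\p(l))$ and $d(j,\infty)=1/\p(j)$, the tail cell has $d$-diameter at most $\sqrt2\,\tau_m=\e_m=2^{-m}D$, while the singletons have diameter $0$; as $\tau_m$ decreases with $m$ the family is nested, so \eqref{assumption} holds. Because $\p$ is nondecreasing, the number of singletons is $k_m=\#\{j:\p(j)<1/\tau_m\}\asymp\p^{-1}(1/\e_m)$, whence $N_m=k_m+1\asymp\p^{-1}(1/\e_m)$, and in particular $N_{n(\e)}\asymp\p^{-1}(1/\e_{n(\e)})$, which will yield the exponent.

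Next I would take the majorizing measure $\m(\{j\})=6/(\pi^2j^2)$, $\m(\{\infty\})=0$, and auxiliary weights $v(m)=1/\o(\e_m)$ with, say, $\o(t)=t^{1/2}$ so that $\int_0^1\o(t)/t\,dt<\infty$. The crux is the single uniform bound $\log\bigl(v(m)/\m(\pi_m(t))\bigr)\le C\log N_m$ for all $t$ and $m$. On a singleton cell $\pi_m(t)=\{j\}$ one has $\log(1/\m(\{j\}))\asymp\log j$, and $j$ can be a singleton only once $N_m> j$, so $\log(1/\m(\{j\}))\preceq\log N_m$; on the tail cell $\m(T_m)=\sum_{j>k_m}6/(\pi^2j^2)\asymp 1/N_m$, giving $\log(1/\m(T_m))\asymp\log N_m$; and in both cases $\log v(m)=O(m)=O(\log N_m)$, since the hypothesis $\log\p(m)=O(\log m)$ gives $\p^{-1}(x)\ge x^{1/C}$ and hence $\log N_m\ge\tfrac1C\log(1/\e_m)\asymp m$. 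Feeding this into the definition of $H(n)$ and comparing the dyadic sum $\sum_{m>n}(2^{-m}D)(\log N_m)^{1/2}$ with the integral via $2^{-m}D\asymp\e_m-\e_{m+1}$ and $N_m\asymp\p^{-1}(1/\e_m)$ identifies $H(n)$, up to a constant, with $\int_0^{\e_n}(\log\p^{-1}(1/u))^{1/2}\,du$, which is finite and tends to $0$ by the standing integral hypothesis; this simultaneously verifies \eqref{mmc}.

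With $H(n)$ so identified, choosing $n(\e)$ by $H(n(\e))\le\e\s=\e/\p(1)$ and invoking Theorem~\ref{tlower} gives $\P\{\sup_n|G_n|\le 2\e/\p(1)\}\ge Ce^{-N_{n(\e)}\log(1/\e)}$, and substituting $N_{n(\e)}\asymp\p^{-1}(1/\e_{n(\e)})$ yields the claim. I expect the main obstacle to be exactly the bookkeeping that converts the abstract $H(n)$ — a supremum over $t$ of a dyadic majorizing sum carrying the $v(m)$ correction — into the clean integral, i.e. proving the uniform inequality $\log(v(m)/\m(\pi_m(t)))\le C\log N_m$ and then matching the dyadic sum to the integral. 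A secondary, purely cosmetic, point is absorbing the constants produced by the $\sqrt2$ in the diameter estimate and by the comparison between $H(n)$ and the integral into the final absolute constant (together with a harmless rescaling of $\e$); this is where the regularity condition $\log\p(m)=O(\log m)$ earns its keep.
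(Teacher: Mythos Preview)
Your proposal is correct and follows essentially the same route as the paper: the same sieve (singletons below a moving threshold plus one tail cell), the same majorizing measure $\m(\{j\})\asymp j^{-2}$, and the same appeal to Theorem~\ref{tlower} with $N_m\asymp\p^{-1}(1/\e_m)$. The only organizational difference is that the paper splits the majorizing sum according to whether $m>\nu(t)$ or $m\le\nu(t)$ (via an auxiliary Lemma on the balls $B(u,\e_n)$) and chooses $v(m)=m^2$, whereas you argue a uniform termwise bound $\log\bigl(v(m)/\m(\pi_m(t))\bigr)\le C\log N_m$; both lead to the same comparison with $\int_0^{\e_n}(\log\p^{-1}(1/u))^{1/2}\,\dd u$.
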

    Condition  $ \log \p(m)= \mathcal O  ( \log  m  )$ is technical. Notice that  it only excludes cases that are too regular,
typically  when $\p(m)$ increases exponentially.
\begin{proof}   Let   $F_n= \p^{-1}(\frac1{\e_n})$, $n\ge 0$. We
notice that $F_1=\p^{-1}(\p(1))=1$. For $u\ge 1$, let $\nu(u)$ denote the unique integer such that $F_{\nu(u)}\le u<F_{\nu(u)+1}$.  
\begin{lemma} \label{phi2} Let $B(u, \e )=\{ v\ge 1: \|G_u-G_v\|_2\le \e\}$. Then, 
\begin{eqnarray*}B(u, \e_n )=\{u\} \qquad\ \     & &(\forall n>\nu(u)),
\cr B(u, \e_n )\supseteq [F_{n+1}, \infty) & & (\forall
n<\nu(u)).
\end{eqnarray*}
\end{lemma}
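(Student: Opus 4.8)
The plan is to reduce everything to the explicit form of the $L^2$-metric on $G(\p)$ and then read off both inclusions from the monotonicity of $\p$. First I would record that for distinct finite indices $\|G_u-G_v\|_2=(\p(u)^{-2}+\p(v)^{-2})^{1/2}$, whereas $\|G_u-G_\infty\|_2=\p(u)^{-1}$; consequently, over all $v\ne u$ (the point $\infty$ included) the distance $\|G_u-G_v\|_2$ is minimal, and equals exactly $\p(u)^{-1}$, precisely at $v=\infty$. Thus any $v\ne u$ obeys $\|G_u-G_v\|_2\ge \p(u)^{-1}$, and this single remark drives the first assertion.

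For the first inclusion, assume $n>\nu(u)$. Since $\e_n=2^{-n}D$ decreases, $1/\e_n$ increases and $F_n=\p^{-1}(1/\e_n)$ is nondecreasing in $n$; the defining inequality $F_{\nu(u)+1}>u$ then gives $F_n>u$, which upon applying $\p$ reads $\e_n<\p(u)^{-1}$. As every $v\ne u$ has $\|G_u-G_v\|_2\ge \p(u)^{-1}>\e_n$, none lies in $B(u,\e_n)$, whence $B(u,\e_n)=\{u\}$.

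For the second inclusion, assume $n<\nu(u)$, so $n+1\le \nu(u)$ and hence $F_{n+1}\le F_{\nu(u)}\le u$ by monotonicity and the definition of $\nu(u)$; in particular $u\ge F_{n+1}$. For any integer $v\ge F_{n+1}$ the inequality $\p(v)\ge \p(F_{n+1})=1/\e_{n+1}$ holds, and likewise $\p(u)\ge 1/\e_{n+1}$, so that $\p(u)^{-1},\p(v)^{-1}\le \e_{n+1}=\e_n/2$. Hence $\|G_u-G_v\|_2^2\le 2\e_{n+1}^2=\e_n^2/2\le \e_n^2$ for finite $v$, while $\|G_u-G_\infty\|_2=\p(u)^{-1}\le \e_n/2$; in every case $v\in B(u,\e_n)$, giving $[F_{n+1},\infty)\subseteq B(u,\e_n)$.

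The only point needing real care --- and the one I would isolate as a short preliminary --- is the meaning of the generalized inverse $\p^{-1}$: the thresholds $F_n$ need not a priori be integers, so I would fix the convention $\p^{-1}(y)=\min\{m\ge 1:\p(m)\ge y\}$, under which each $F_n$ is an integer, the equivalences $F_n>u \iff \p(u)<1/\e_n$ and $v\ge F_{n+1}\Rightarrow \p(v)\ge 1/\e_{n+1}$ both hold by monotonicity of $\p$, and the borderline cases are unambiguous. I would also keep $\infty$ explicitly in view throughout, since it is exactly the index realizing the minimal distance $\p(u)^{-1}$ and thereby decides when the ball collapses to the singleton $\{u\}$.
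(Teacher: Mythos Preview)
Your argument is correct and follows essentially the same route as the paper's proof: both parts reduce to the inequality $\e_{\nu(u)+1}<\p(u)^{-1}\le\e_{\nu(u)}$, the first assertion uses $\|G_u-G_v\|_2>\p(u)^{-1}>\e_n$ for $v\ne u$, and the second bounds $\|G_u-G_v\|_2\le\sqrt2\,\e_{n+1}<\e_n$ when $u,v\ge F_{n+1}$. Your extra care about the meaning of $\p^{-1}$ and the role of the point $\infty$ is welcome (one tiny slip: $\p(F_{n+1})\ge 1/\e_{n+1}$ rather than equality in general, but only the inequality is needed).
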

  \begin{proof} Plainly 
 $\e_{\nu(u)+1}<\frac1{\p(u)}\le \e_{\nu(u) } $. If $n>\nu (u)$, then for any $v$, $\|G_u-G_v\|_2> \frac1{\p(u) }>\e_{\nu(u)+1}\ge
\e_n$. Hence $B(u, \e_n )=\{u\}$. Now notice that if $m\le \nu (u)$, then $v\ge F_m=\p^{-1}(\frac1{\e_m})$ implies that 
$\frac1{\p(v)}\le
\e_m$, and so 
$$\|G_u-G_v\|_2\le (\e^2_{\nu(u) }+ \e^2_m)^{1/2}\le \sqrt 2 \e_m< \e_{m-1}. $$
 Hence with $n=m-1$ the second assertion.
\end{proof}
 Let $\Pi_0=\N$.  For  $\nu\ge 1$, let $\Pi_\nu$ be the finite partition  of $\N$ defined by:   
$$\pi\in \Pi_\nu\Longleftrightarrow   \hbox{ $\pi= \{u\}$,  $u<F_\nu$   or $\pi=  [F_\nu , \infty)$.}$$ 
Then $\#\{\Pi_\nu\}= F_\nu$ and $ \Pi_{\nu +1}$ is a refinement of $\Pi_\nu$.  Further, assumption (\ref{assumption}) is satisfied since by Lemma \ref{phi2}
$$\max_{\pi \in \Pi_{\nu }} \max_ {    u,v\in \pi }d(u,v)\le  \e_{\nu} .  
$$  Let $\m$ be the probability
measure
 on $\N$ defined   by $\m\{t\}= ct^{-2}$, $c  = (\sum_{t=1}^\infty t^{-2})^{-1}$. 
Let $t\ge 1$, we set $\pi_m(t)= \{t\}$ if $t<F_m$ and $\pi_m(t)= [F_m, \infty)$ otherwise. It follows that 
\begin{equation}\m(\pi_m(t))\ge \begin{cases}Ct^{-2} & {\rm if}\quad  m>\nu(t)\cr 
 CF^{-1}_{m}  & {\rm if}\quad   m\le \nu(t). 
\end{cases}\end{equation}
Fix some integer $n$ and let $t\ge 1$. If $  n>\nu(t)$, then $t<F_n=\p^{-1}(\frac1{\e_{n }})$ and 
\begin{eqnarray*}\sum_{m=n}^\infty \e_m \big(\log \frac1{\m(\pi_m(t))}\big)^{1/2}  \le   C \big(\sum_{m=n}^\infty \e_m \big)\big(\log
t\big)^{1/2}\le C   \e_n  \big(\log
\p^{-1}(\frac1{\e_{n }})\big)^{1/2}. 
\end{eqnarray*}
Now let  $n\le \nu(t) $. If  $\nu(t)\ge m\ge n$, then  $\m(\pi_m(t))\ge CF^{-1}_{m}\ge CF^{-1}_{\nu(t)} $ and as  $t< F_{\nu(t) +1}$, we may write
\begin{eqnarray*}\sum_{m=n}^\infty \e_m \big(\log
\frac1{\m(\pi_m(t))}\big)^{1/2}& \le & C \sum_{m=n}^{\nu(t)}
\e_m
\big(\log
\p^{-1}(\frac1{\e_{m }})\big)^{1/2}
\cr & & +
\big(\sum_{m> \nu(t)} \e_m \big)  (\log t )^{1/2}
\cr &\le  & C \sum_{m=n}^{\nu(t)}
\e_m
\big(\log
\p^{-1}(\frac1{\e_{m }})\big)^{1/2} + C\e_{\nu(t)}(\log t )^{1/2}  
\cr 
   &\le  & C \sum_{m=n}^{\nu(t)+1}
\e_m
\big(\log
\p^{-1}(\frac1{\e_{m }})\big)^{1/2} 
\cr  &\le  & C \int^{ \e_n}_{\e_{\nu(t)+2}}
 \big(\log
\p^{-1}(\frac1{u})\big)^{1/2}\dd u.  
\end{eqnarray*}
  Thereby
$$ \sup_{t\ge 1}\sum_{m=n}^\infty \e_m \big(\log
\frac1{\m(\pi_m(t))}\big)^{1/2} \le C \int^{ \e_n}_{0}
 \big(\log
\p^{-1}(\frac1{u})\big)^{1/2}\dd u\to 0, $$
as $n\to \infty$, by assumption.  Condition (\ref{mmc}) is  thus realized. It remains to choose $v$.
 We first observe that if  
  $ \log v(m)= \mathcal O\big ( \log \p^{-1}(m)\big)$, then
\begin{eqnarray*}
\sum_{m>n}  2^{-m}D \big(\log \frac{
v(m)}{\m(\pi_m(t))}\big)^{1/2}&\le &
\sum_{m>n}   2^{-m}D \big(\log \frac{
1}{\m(\pi_m(t))}\big)^{1/2}\cr 
& &+ \sum_{m>n}   2^{-m}D \big(\log  v(m)  \big)^{1/2}
\cr&\le & C \int^{ \e_n}_{0}
 \big(\log
\p^{-1}(\frac1{u})\big)^{1/2}\dd u  .
\end{eqnarray*}
 
 Next, clearly   $\sum_{m=0}^\infty v(m)^{-1}<\infty$   if $v(m)=m^2$. This imposes that  $ \log m= \mathcal O\big ( \log \p^{-1}(m)\big)$ or $ \log \p(m)= \mathcal O  ( \log  m  )$, which is precisely assumed.
Consequently,  
$$H(n)=\sup_{t\in
T} \sum_{m>n}  (2^{-m}D)\big(\log \frac{
v(m)}{\m(\pi_m(t))}\big)^{1/2}=\int^{ \e_n}_{0}
 \big(\log
\p^{-1}(\frac1{u})\big)^{1/2}\dd u .$$
Let  $ n(\e)$ be such that
 $  H(n(\e)) \le   \frac{\e}{\p(1)} $. By applying Theorem \ref{tlower}, we deduce that
$$\P\Big\{ \sup_{ t\ge 1} |G_t|\le 2  \e \s    \Big\}
\ge
 Ce^{-N_{n(\e)} (\log \frac1{\e})   }    .$$ 
\end{proof} 
The following corollary easily follows.
\begin{corollary} a) Let $\p(t)= (\log t)^\b$, $\b>1/2$. Then,
$$\log \Big|\log \P\Big\{\sup_{n\ge 1} \frac{|g_n|}{\p(n)}\le \e \Big\}\Big| \preceq \e^{-\frac{2}{2\b-1}}. $$
b) Let $\p(t)= (\log t)^{\frac1{2}}(\log\log t)^{1+h}$, $ h>0$. Then,
$$\log\log  \Big|\log \P\Big\{\sup_{n\ge 1} \frac{|g_n|}{\p(n)}\le \e \Big\}\Big| \preceq \e^{-\frac1{h}}. $$

\end{corollary}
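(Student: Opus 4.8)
The plan is to apply Theorem \ref{tlower1} verbatim to each weight $\p$, so that the work reduces to three explicit asymptotic computations: inverting $\p$, evaluating the majorizing integral $H(n)$ in order to locate the scale $\e_{n(\e)}$, and then reading off $N_{n(\e)}=\p^{-1}(1/\e_{n(\e)})$ and iterating logarithms. First I would verify the three hypotheses of Theorem \ref{tlower1}. Condition (\ref{phi}), namely $\sqrt{\log n}=o(\p(n))$, holds in a) because $(\log n)^{\b-1/2}\to\infty$ for $\b>1/2$, and in b) because $(\log\log n)^{-(1+h)}\to 0$ for $h>0$. The technical condition $\log\p(m)=\mathcal O(\log m)$ is immediate in both cases since $\log\p(m)$ is a fixed power of iterated logarithms. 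The integral condition $\int_0^D(\log\p^{-1}(1/u))^{1/2}\,\dd u<\infty$ will fall out of the same computation used for $H(n)$, and the harmless positive constants $\p(1)$, $D$, $\s$ entering the theorem are absorbed into $\preceq$ throughout.

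For part a) one has $\p^{-1}(s)=\exp(s^{1/\b})$, so $(\log\p^{-1}(1/u))^{1/2}=u^{-1/(2\b)}$, which is integrable near $0$ exactly when $\b>1/2$, and $H(n)=\int_0^{\e_n}u^{-1/(2\b)}\,\dd u\approx\e_n^{(2\b-1)/(2\b)}$. Solving $H(n(\e))\approx\e$ gives $\e_{n(\e)}\approx\e^{2\b/(2\b-1)}$, whence $N_{n(\e)}=\p^{-1}(1/\e_{n(\e)})=\exp(\e_{n(\e)}^{-1/\b})\approx\exp(\e^{-2/(2\b-1)})$. Theorem \ref{tlower1} then yields $|\log\P\{\cdots\}|\preceq N_{n(\e)}\log(1/\e)$, and taking the logarithm twice collapses the slowly varying factor $\log(1/\e)$ against the dominant term $\log N_{n(\e)}=\e^{-2/(2\b-1)}$, producing the claimed $\log|\log\P\{\cdots\}|\preceq\e^{-2/(2\b-1)}$.

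For part b) the same scheme applies, but the inversion is the delicate step, since $\p(t)=(\log t)^{1/2}(\log\log t)^{1+h}$ has no closed form inverse and must be handled asymptotically. Writing $L=\log t$ and $s=\p(t)=L^{1/2}(\log L)^{1+h}$, taking logarithms gives $\log L\sim 2\log s$, and feeding this back produces $L=\log\p^{-1}(s)\approx s^2/(\log s)^{2(1+h)}$ up to a constant. Hence $(\log\p^{-1}(1/u))^{1/2}\approx u^{-1}(\log(1/u))^{-(1+h)}$, and the substitution $w=\log(1/u)$ reduces $H(n)$ to $\int_{\log(1/\e_n)}^{\infty}w^{-(1+h)}\,\dd w\approx(\log(1/\e_n))^{-h}$, finite precisely when $h>0$. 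Then $H(n(\e))\approx\e$ gives $\log(1/\e_{n(\e)})\approx\e^{-1/h}$, i.e. $\e_{n(\e)}\approx\exp(-\e^{-1/h})$, so that $\log N_{n(\e)}=\log\p^{-1}(1/\e_{n(\e)})\approx\exp(2\e^{-1/h})\,\e^{2(1+h)/h}$ and $\log\log N_{n(\e)}\approx 2\e^{-1/h}$. Substituting into Theorem \ref{tlower1} and taking logarithms three times—once to strip the outer exponential, once against the factor $\log(1/\e)$, and a third time to reach $\log\log N_{n(\e)}$—gives $\log\log|\log\P\{\cdots\}|\preceq\e^{-1/h}$.

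The step I expect to be the main obstacle is precisely the asymptotic inversion of $\p$ in part b) together with the control of error terms: one must check that the subleading correction to $\log L\sim 2\log s$, which is of order $\log\log s$, genuinely does not alter the final power $\e^{-1/h}$ after three successive logarithms. Once that is pinned down the remaining manipulations are routine, and the two-sided estimate is only ever needed as an upper bound, so the $\preceq$ relation suffices at every stage.
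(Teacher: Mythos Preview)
Your proposal is correct and is exactly the computation the paper has in mind when it writes ``The following corollary easily follows'': one applies Theorem \ref{tlower1}, inverts $\p$ asymptotically, evaluates $H(n)$, solves for $\e_{n(\e)}$, and reads off $N_{n(\e)}=\p^{-1}(1/\e_{n(\e)})$. The paper gives no further detail, and your handling of the asymptotic inversion in part b) and the absorption of slowly varying factors under iterated logarithms is the intended route.
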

\section{Ultrametric Gaussian Processes}
For ultrametric Gaussian processes, a general upper bound of small deviations can be established.  And by using Theorem \ref{tlower}, this is completed with a sharp lower bound. A metric space $(T,d)$ is called
ultrametric when $d$ satisfies the strong triangle
inequality:
$$d(s,t)\le \max\big(d(s,u), d(u,t)\big) , \quad \qq (\forall s,t,u\in T).$$
Thus two balls of same radius are either disjoint or identical.   Let $B(t,u)= \{ s\in T: d(s,t)\le
u$,  and let $v\le u$. It also follows that  
 $s\in B(t,u)\Rightarrow B(s,v)\subset B(t,u) $. When $(T,d)$ is separable, it is
easy to show that
$(T,d)$ embeds continuously into a projective limit of sets, itself endowed with an ultrametric structure. Since we need the construction,
we briefly recall it. Let $D= {\rm diam}
(T,d)$. Let
$S_n$ be the set of centers of balls forming a minimal covering of $(T,d)$ with closed balls of radius $\e_n= 2^{-n}D$, 
 $n=0,1,\ldots$. Notice that each ball 
$B(t,\e_n)$ contains at least one element of $S_{n+1}$, thereby a ball $B(s,\e_{n+1})$ for some $s\in S_{n+1}$. Otherwise, there is one
ball
$B(t_0,\e_n)$, say, such that
$\min\{ d(t_0,s), s\in S_{n+1}\}>\e_n>\e_{n+1}$, which contradicts the fact that $S_{n+1}$ realizes a covering of $T$ of order $\e_{n+1}$.
Consider for
$n=0,1,\ldots$ the  mappings
$\theta_n :T\to S_n$, $\Pi_{n,n-1}: S_n\to S_{n-1}$ respectively defined by $ d(s,\theta_n(s))\le \e_n$ and
$ d(t,\Pi_{n,n-1}(t))\le
\e_{n-1}$. Next we define $\Pi_{n,k} :  S_n\to S_{k}$   for $n\ge k$ as follows: $\Pi_{n,n} = {\rm Id}(S_n)$ and
$$ \Pi_{n,k}=  \Pi_{n,n-1}\circ\ldots  \circ\Pi_{k+1,k}. $$
 The following  elementary   lemma arises from the construction itself, so we omit the proof. 
\begin{lemma} \label{ultrabas}The pair $\big((S_n),(\Pi_{n,k})  \big)$ defines a projective system of sets and we have
the relations
$$\theta_k=\Pi_{n,k}\circ \theta_n, \quad\qq (\forall n\ge k\ge 0).$$ 
Let $L=\lim_{{}_{{}_{\hskip -13 pt\longleftarrow}}}\big((S_n),(\Pi_{n,k})  \big)$ denote its projective limit, $G=\prod_{k=0}^\infty S_k$.
Let $\Pi_k$ be the restriction to $L$ of the projection of $G$ onto $S_k$, $k=0,1,\ldots$. Put for any two elements $s,t$ of $L$
$$\d(s,t)=  \e_{n(s,t)}, $$
where $n(s,t)= \sup \{ k\ge 0: \Pi_k(s)=\Pi_k(t)\}$. Then $(L,\d)$ is a compact ultrametric space. Moreover, the mapping $\ell:
(T,d)\to (L, \d)$ defined by $\ell(t)= \{ \theta_k(t), k\ge 0\}$ a continuous embedding from $(T, d)$ to $(L,\d)$, and we have
the relations
$$\frac1{2}\d(\ell(s),\ell(t))\le d(s,t)\le \d(\ell(s),\ell(t)), \qq (\forall s,t\in T). $$
\end{lemma}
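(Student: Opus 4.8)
The plan is to extract from the ultrametric axiom the single structural fact that drives everything: two closed balls of equal radius are either disjoint or equal, so a \emph{minimal} covering by $\e_n$-balls can contain no two coinciding balls and is therefore a partition of $T$ into pairwise disjoint balls $\{B(s,\e_n):s\in S_n\}$. First I would record this partition property at each level $n$. It makes both $\theta_n$ and $\Pi_{n,n-1}$ genuine single-valued maps: each $t\in T$ lies in exactly one ball $B(s,\e_n)$, so $\theta_n(t)$ is forced to be its unique center (any other admissible center would produce a second ball through $t$, hence an equal ball, hence the same center); likewise each $s\in S_n$ lies in a unique $B(s',\e_{n-1})$, which defines $\Pi_{n,n-1}(s)=s'$. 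This well-definedness is the only place where minimality of the coverings is used.

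Next I would prove the commutation relation $\theta_{n-1}=\Pi_{n,n-1}\circ\theta_n$, from which $\theta_k=\Pi_{n,k}\circ\theta_n$ follows by iteration. Writing $s=\theta_n(t)$ and $s'=\Pi_{n,n-1}(s)$, one has $d(t,s)\le\e_n\le\e_{n-1}$ and $d(s,s')\le\e_{n-1}$, so the strong triangle inequality gives $d(t,s')\le\e_{n-1}$; since $\theta_{n-1}(t)$ is the \emph{unique} center of an $\e_{n-1}$-ball containing $t$, we must have $\theta_{n-1}(t)=s'$. The cocycle identity $\Pi_{n,k}\circ\Pi_{m,n}=\Pi_{m,k}$ ($m\ge n\ge k$) is immediate from the definition $\Pi_{n,k}=\Pi_{n,n-1}\circ\cdots\circ\Pi_{k+1,k}$, so $\big((S_n),(\Pi_{n,k})\big)$ is a projective system, and the commutation relation says exactly that $\ell(t)=(\theta_k(t))_{k\ge0}$ satisfies the compatibility constraints defining $L$, i.e.\ $\ell(t)\in L$.

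For the ultrametric structure on $L$ I would first note that if $\Pi_k(s)=\Pi_k(t)$ then $\Pi_j(s)=\Pi_{k,j}\Pi_k(s)=\Pi_{k,j}\Pi_k(t)=\Pi_j(t)$ for every $j\le k$, so the agreement set $\{k:\Pi_k(s)=\Pi_k(t)\}$ is an initial segment of $\N$; hence $n(s,t)$ is well defined (and infinite only when $s=t$, since $S_0$ is a singleton because $B(t_0,D)=T$). The strong triangle inequality for $\d$ reduces to $n(s,t)\ge\min\big(n(s,u),n(u,t)\big)$, which holds because agreement of $s,u$ and of $u,t$ at a common level forces agreement of $s,t$ there. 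Compactness of $(L,\d)$ I would obtain by viewing $L$ as a closed subset of $G=\prod_k S_k$ endowed with the product of discrete topologies: the $\d$-balls $\{t\in L:\Pi_n(t)=\Pi_n(s)\}$ are exactly the basic product neighbourhoods, $L$ is closed (each compatibility condition $\Pi_{n,n-1}\Pi_n=\Pi_{n-1}$ is closed), and each $S_n$ is finite, so Tychonoff applies. The finiteness of the $S_n$ is precisely where total boundedness of $(T,d)$ (finiteness of the entropy numbers $N(T,d,\e_n)$) enters, and is the one hypothesis beyond separability that the compactness claim really needs.

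Finally I would establish the two-sided comparison. Set $N=n(\ell(s),\ell(t))$, so that $\d(\ell(s),\ell(t))=\e_N$, $\theta_N(s)=\theta_N(t)=:c$ and $\theta_{N+1}(s)\ne\theta_{N+1}(t)$. The upper bound is immediate: $d(s,c)\le\e_N$ and $d(t,c)\le\e_N$, whence $d(s,t)\le\e_N$ by the strong triangle inequality. For the lower bound, if one had $d(s,t)\le\e_{N+1}$ then $t\in B(s,\e_{N+1})=B(\theta_{N+1}(s),\e_{N+1})$, forcing $\theta_{N+1}(t)=\theta_{N+1}(s)$ by uniqueness and contradicting the value of $N$; hence $d(s,t)>\e_{N+1}=\tfrac12\e_N=\tfrac12\d(\ell(s),\ell(t))$. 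These inequalities give injectivity of $\ell$ (the case $N=\infty$ collapses $s=t$) and show that $d$ and $\d\circ\ell$ agree up to the factor $2$, so $\ell$ is a continuous embedding with continuous inverse. The one genuinely delicate point in the whole argument is the disjointness/uniqueness coming from minimality of the coverings; everything else is a formal consequence of it together with the strong triangle inequality.
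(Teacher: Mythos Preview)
The paper omits the proof entirely, stating only that ``the following elementary lemma arises from the construction itself, so we omit the proof.'' Your write-up is correct and supplies exactly the details the paper leaves implicit: the key observation that in an ultrametric space a \emph{minimal} $\e_n$-covering is in fact a partition (so $\theta_n$ and $\Pi_{n,n-1}$ are single-valued), the one-step commutation $\theta_{n-1}=\Pi_{n,n-1}\circ\theta_n$ via the strong triangle inequality and uniqueness, the Tychonoff argument for compactness of $L$, and the two-sided comparison by reading off $N=n(\ell(s),\ell(t))$.

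Two small remarks. First, in your uniqueness argument the phrase ``hence an equal ball, hence the same center'' is slightly loose: in an ultrametric space two equal balls can have distinct centers; what forces the centers to coincide is \emph{minimality} of the covering (a repeated ball could be dropped), which you do invoke a sentence earlier. Second, your observation that compactness of $L$ requires finiteness of each $S_n$ (total boundedness of $(T,d)$), not merely separability, is a genuine point the paper glosses over; it is implicit in the paper's standing use of entropy numbers but worth making explicit as you do.
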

The projective limit $L$ and thereby $T$, is easily  visualized as a tree with branches in $G$, anytwo of them  
separating at   offshoots of high "$
n(s,t)$". One can attach to any such  tree  an ultrametric Gaussian process. These classes of processes have been much
investigated by Fernique, see   \cite{F}. Let
$\{g_n, n\in
\Sigma S_k\}$ be a sequence of independent Gaussian standard random variables. We put
$$ Z(t)= \sum_{n=0}^\infty \e_ng_{\Pi_n(t)} , \qq (\forall  t\in T).$$
 \begin{theorem}  a)  For some absolute constant
$\gamma>0$, we have for   
$
\e
\le D$,
\begin{eqnarray*}\P\Big\{\sup_{s,t\in L} |Z(s)-Z(t)|  \le   \e \Big\} &\le & e^{-\gamma N(T,2 \e )}.
\end{eqnarray*}
b) Assume that condition (\ref{mmc}) is fulfilled. Then, with the notation of Theorem \ref{tlower}, letting $\s=2D/\sqrt 3   $,
$$\P\Big\{ \sup_{ t\in T} |Z(t) |\le 2  \e \s    \Big\}
\ge
 Ce^{-N_{n(\e)} (\log \frac1{\e})   }    .$$
\end{theorem}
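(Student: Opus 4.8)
The plan is to reduce both parts to the covariance structure of $Z$, which is transparent because of the tree. Fix $s,t$ with $n(s,t)=k$: the nodes $\Pi_n(s),\Pi_n(t)$ agree for $n\le k$ and, once they split, lie in distinct branches and hence in distinct generations for $n>k$; consequently all the Gaussians occurring in $Z(s)-Z(t)=\sum_{n>k}\e_n(g_{\Pi_n(s)}-g_{\Pi_n(t)})$ are independent and $\|Z(s)-Z(t)\|_2^2=2\sum_{n>k}\e_n^2=\frac{2}{3}\e_k^2$. Writing $\rho(s,t)=\|Z(s)-Z(t)\|_2$ this gives $\rho=\sqrt{2/3}\,\d$; and since a single $Z(t)$ keeps all its terms, $\|Z(t)\|_2^2=\sum_{n\ge0}\e_n^2=\frac{4}{3}D^2$, so $\s=\sup_t\|Z(t)\|_2=2D/\sqrt3$, as stated.

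For a), I would exhibit about $N(T,2\e)$ independent Gaussians that the small-ball event squeezes into a short interval. Fix $\e\le D$ and let $m$ be the least integer with $\e_m<2\e$; then $\e\le\e_m<2\e$, and by ultrametricity the $2\e$-balls are exactly the level-$m$ balls $\{B(s,\e_m):s\in S_m\}$, so $N(T,2\e)=\#S_m=N_m$ (in the ambient metric this holds up to one level and the constant $\sqrt{2/3}$, absorbed below). Choose a representative $t_\sigma$ with $\Pi_m(t_\sigma)=\sigma$ for each $\sigma\in S_m$ and split $Z(t_\sigma)=Y_\sigma+\e_m g_\sigma$ with $Y_\sigma=\sum_{n\ne m}\e_n g_{\Pi_n(t_\sigma)}$. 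As $g_\sigma$ is the node-$\sigma$ Gaussian, the family $(g_\sigma)_{\sigma\in S_m}$ is i.i.d.\ standard and independent of $(Y_\sigma)_\sigma$. The event $\{\sup_{s,t}|Z(s)-Z(t)|\le\e\}$ forces the range of $(Z(t_\sigma))_\sigma$ to be $\le\e$; fixing a reference $\sigma_0$, conditioning on $(Y_\sigma)_\sigma$ and $g_{\sigma_0}$, and using that any interval of length $2\e$ has $\e_m g_\sigma$-mass at most $\frac{2\e}{\e_m\sqrt{2\pi}}\le\frac{2}{\sqrt{2\pi}}<1$ (this is where $\e_m\ge\e$ is used), I get after taking expectations $\P\{\sup_{s,t}|Z(s)-Z(t)|\le\e\}\le(\frac{2}{\sqrt{2\pi}})^{N_m-1}\le e^{-\g N(T,2\e)}$ for a universal $\g>0$ (the bound is trivial when $N_m\le1$).

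For b), the plan is to feed the tree partitions into Theorem \ref{tlower}. Take $\Pi_n$ to be the partition of $T$ into the level-$n$ balls $\{B(s,\e_n):s\in S_n\}$; it is finite, ordered and nested, and by the variance computation any two points of a common cell split at level $\ge n$, whence $\rho\le\sqrt{2/3}\,\e_n=2^{-n}\,\mathrm{diam}(T,\rho)$, which is exactly (\ref{assumption}) for the process metric $\rho$. Here $N_n=\#S_n$ and $\s=2D/\sqrt3$ from the first paragraph. Since (\ref{mmc}) is assumed, one may pick the auxiliary $v$ and the index $n(\e)$ as in Theorem \ref{tlower}; its conclusion then reads $\P\{\sup_{t\in T}|Z(t)|\le2\e\s\}\ge Ce^{-N_{n(\e)}(\log\frac1\e)}$, which is the assertion.

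The only real difficulty is in a), where two demands conflict: counting $N(T,2\e)=\#S_m$ independent variables forces the fine level $m$ with $\e_m<2\e$, while keeping each node's anti-concentration factor $\frac{2\e}{\e_m\sqrt{2\pi}}$ below $1$ needs $\e_m$ comparable to $\e$; the geometric scale $\e_n=2^{-n}D$ delivers $\e_m\in[\e,2\e)$, which threads this needle with room to spare. The remaining points—surjectivity of $\Pi_m$ onto $S_m$ so the representatives $t_\sigma$ exist, $d$-separability, and the legitimacy of the conditioning step—are routine.
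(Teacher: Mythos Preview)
Your argument is correct. Part b) is handled exactly as in the paper: a direct appeal to Theorem~\ref{tlower} once the level-$n$ ball partitions are seen to satisfy (\ref{assumption}) for the process metric $\rho=\sqrt{2/3}\,\d$ and $\s=2D/\sqrt 3$ has been computed.

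For part a) you take a genuinely different route. The paper pairs the representatives $t_{n,j}$ two by two and argues that the $N_n/2$ increments $Z(t_{n,2i})-Z(t_{n,2i-1})$ form a Gaussian vector with diagonal covariance, so that $\P\{\max_i|\,\cdot\,|\le \e_n\}\le \P\{|g|\le c\}^{N_n/2}$. You instead isolate the single level-$m$ term $\e_m g_\sigma$ in each $Z(t_\sigma)$, use that the family $(g_\sigma)_{\sigma\in S_m}$ is manifestly i.i.d.\ and independent of the remaining sum $Y_\sigma$, and bound by anti-concentration (the Gaussian density is at most $1/\sqrt{2\pi}$). Your decomposition is arguably more robust: the paper's orthogonality claim for the paired increments tacitly depends on how the pairing is chosen (with an unfortunate enumeration two increments can share a common-ancestor Gaussian at a level between their split point and $n$), whereas your argument works only with the level-$m$ node variables, whose independence is immediate from the definition of $Z$. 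The cost is modest: you obtain $N_m-1$ factors with base $2/\sqrt{2\pi}$ rather than $N_n/2$ factors with base $\P\{|g|\le c\}$, and your identification $N(T,2\e)\simeq N_m$ is only up to one dyadic level; all of this is absorbed into the absolute constant $\gamma$, as you note.
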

 \begin{proof} a) The assumption made implies that from each offshoot of $S_n$  grows     at least one new branch. A plain calculation yields that
$d_Z(s,t):=\|Z(s)-Z(t)\|_2=
\e_{n(s,t)}(3/2)^{1/2}$,
$s,t\in T$.  Further, we notice that
$$Z(t)-Z(s)=
\sum_{n>n(s,t)}^\infty \e_ng_{\Pi_n(t)}. $$
 Write  $S_n= \{ s_{n,j}, 1\le j\le N_n\}$ where we set $N_n=N(T, \e_n)$. Let $L_n\subset L$, $L_n= \{ t_{n,j}, 1\le j\le
N_n\}$ be such that
$\Pi_n(t_{n,j} )=s_{n,j}$ for each $j$.   Then $\E
(Z(t_{n, i})-Z(t_{n, i-1}))^2= (3/2) \e_{n }^2  $, and since  the random variables $g_n$ are independent, we observe that
\begin{equation} \E (Z(t_{n,2i})-Z(t_{n,2i-1}))(Z(t_{n,2j})-Z(t_{n,2j-1}))= 0, \qq \quad (\forall 1\le j<i\le N_n/2).
\end{equation} So that the covariance matrix of $\{Z(t_{n,2i})-Z(t_{n,2i-1}),1\le  i\le N_n/2\}$ is diagonal with all diagonal
entries equal to
$(3/2) \e_{n }^2 $. Consequently 
\begin{eqnarray*}\P\Big\{\sup_{s,t\in L} |Z(s)-Z(t)|\le \e_{n }\Big\}&\le& \P\Big\{\sup_{1\le  i\le N_n/2} |Z(t_{n,2i})-Z(t_{n,2i-1})|\le
\e_{n }\Big\}
\cr &=& \P\Big\{\sup_{1\le  i\le N_n/2}  \frac{|Z(t_{n,2i})-Z(t_{n,2i-1})|}{\|Z(t_{n,2i})-Z(t_{n,2i-1})\|_2} \le
c\Big\} 
\cr &\le &  e^{-\gamma N(T, \e_n)},  
\end{eqnarray*}
$c$, $\gamma$ being absolute constants. Let $0<\e \le {\rm diam}(T,d)$, and let $n$ be such that $\e_{n+1}<\e \le \e_n $. Then 
\begin{eqnarray*}\P\Big\{\sup_{s,t\in L} |Z(s)-Z(t)|  \le   \e \Big\}&\le & \P\Big\{\sup_{s,t\in L} |Z(s)-Z(t)|\le \e_{n
}\Big\}\le  e^{-\gamma N(T, \e_n)}
\cr &\le & e^{-\gamma N(T,2 \e )}.
\end{eqnarray*}
b) This is    a direct consequence of Theorem \ref{tlower}.\end{proof}
 {\baselineskip 10pt

  \end{document}